\newtheorem{thm}{Theorem}[section]
\newtheorem{lem}[thm]{Lemma}
\newtheorem{cor}[thm]{Corollary}
\newtheorem{prop}[thm]{Proposition}
\def\R{\ifmmode{\Bbb R}\else{$\Bbb R$}\fi}
\numberwithin{equation}{section}
\begin{document}

\title[Ancient and Eternal Solutions to Mean Curvature Flow]{Ancient and Eternal Solutions to Mean Curvature Flow from Minimal Surfaces}

\author{Alexander Mramor and Alec Payne}
\address{Department of Mathematics, University of California Irvine, Irvine, CA 92617}
\address{Courant Institute, New York University, New York City, NY 10012}
\email{mramora@uci.edu,ajp697@nyu.edu}

\date{}


\begin{abstract} We construct embedded ancient solutions to mean curvature flow related to certain classes of unstable minimal hypersurfaces in $\mathbb{R}^{n+1}$ for $n \geq 2$. These provide examples of mean convex yet nonconvex ancient solutions that are not solitons, meaning that they do not evolve by rigid motions or homotheties. Moreover, we construct embedded eternal solutions to mean curvature flow in $\mathbb{R}^{n+1}$ for $n \geq 2$. These eternal solutions are not solitons, are $O(n)\times O(1)$-invariant, and are mean convex yet nonconvex. They flow out of the catenoid and are the rotation of a profile curve which becomes infinitely far from the axis of rotation. As $t \to \infty$, the profile curves converge to a grim reaper for $n \geq 3$ and become flat for $n=2$. Concerning these eternal solutions, we also show they are asymptotically unique up to scale among the embedded $O(n)\times O(1)$-invariant, eternal solutions with uniformly bounded curvature and a sign on mean curvature.
\end{abstract}

\maketitle
\vspace{-.15in}
\section{Introduction}

Ancient solutions to mean curvature flow, i.e. solutions existing on the time interval $(-\infty, C]$, $- \infty < C \leq \infty$, play an important role in the singularity analysis of the flow as the natural blowup limits after rescaling about a singularity, making their study central in defining and understanding weak notions of the flow. A special type of ancient solution is the eternal solution, which is an ancient solution that exists for all time, i.e. it exists on the time interval $(-\infty, \infty)$. Eternal solutions are a subset of ancient solutions but are much more rigid and less is known about them. There are many known ancient solutions to mean curvature flow in Euclidean space, but only a small number of them are eternal, particularly if one excludes the translating solitons. Eternal solutions arise naturally as the blowup limits of Type II singularities, whereas non-eternal ancient solutions arise as the blowup limits of Type I singularities (see~\cite{Mant} for a description of Type I and Type II singularities). From a more analytic perspective, mean curvature flow is the natural analogue of the heat equation in the setting of submanifold geometry, and ancient solutions are the natural analogues of global solutions to elliptic equations, distinguishing ancient solutions in this sense. 
$\medskip$

It is useful then to have a wide variety of examples of ancient solutions to the flow to help understand the phenomena that could be realized by solutions to mean curvature flow. Ancient solutions can be split up among those which are solitons and those which are not solitons. By ``soliton'', we mean a solution to mean curvature flow which evolves by a combination of rigid motions and homotheties. And by ``non-soliton,'' we mean a solution which is not a soliton. There are far more soliton ancient solutions known than non-soliton ancient solutions. Ancient solutions may also be described as either convex or nonconvex. An ancient solution is called convex if every timeslice of the flow is a convex surface. Convexity is an important characteristic of an ancient solution as the ancient solutions which arise as blowup limits of mean convex mean curvature flow are convex~\cite{HS3, W2}.
$\medskip$

\vspace{-.03in}
Some examples of convex ancient solitons include the standard shrinking spheres and cylinders, the Abresch-Langer curves~\cite{AL}, and some of the rotating and shrinking solitons to curve shortening flow~\cite{Ha}. Examples of nonconvex ancient solitons include the Angenent torus~\cite{Ang}, desingularizations of the sphere and the Angenent torus \cite{KKM} and the sphere and the plane \cite{N1, N2, N3}, the high genus min-max constructions of Ketover \cite{Ket}, and many of the rotating and shrinking solitons to curve shortening flow \cite{Ha}. Among the convex eternal solitons are the grim reaper, the bowl soliton~\cite{AltWu}, the strictly convex translating solitons lying in slabs~\cite{BLT1, HIMW}, and the non-rotationally symmetric entire translators of Wang \cite{W}. Finally, some examples of nonconvex eternal solitons include any non-flat minimal surface, the winglike translators or translating catenoids~\cite{CSS}, the periodic Scherk-like translators \cite{HMW}, translators associated to minimal surfaces like the Costa-Hoffman-Meeks surface \cite{DPN}, the Yin-Yang spiral in one dimension~\cite{Alt}, the purely rotating solitons in dimensions two and higher that are analogous to Yin-Yang spirals~\cite{HS}, the nonconvex translating tridents \cite{N, N5}, and the multitude of exotic immersed self shrinkers \cite{DK, DLM}. Despite the richness of the ancient solitons, relatively few examples of non-soliton ancient solutions are known. We will focus on non-soliton ancient solutions throughout this paper.

\begin{center}
\begin{table}[h]\caption{}\label{table 1}
\begin{tabular}{ |p{1.25cm}|p{6.3cm}|p{6.35cm}|  }
 \hline
 \multicolumn{3}{|c|}{\textbf{Non-soliton Ancient Solutions in Euclidean Space}} \\
 \hline
 & \hfil (Strictly) Convex & \hfil Nonconvex\\
 \hline
 \vfil  \vspace{.55in} Eternal   & \begin{itemize}[leftmargin=.4cm]
     \item Assuming strict convexity and that the curvature attains its maximum at a point in spacetime, non-soliton convex eternal solutions do not exist~\cite{Ham}. In \cite{W2}, White conjectured that any nonflat convex eternal solution is a translating soliton.
 \end{itemize}    &\begin{itemize}[leftmargin=.4cm]
    \item The ancient sine curve in $\mathbb{R}^2$ \cite{NIW}, also known as the hairclip, as well as the closely related truncated versions of the ancient sine curve \cite{You}
    \item The examples of Corollary \ref{nonconvex eternal} in $\mathbb{R}^{n+1}$, $n \geq 2$
 \end{itemize}  \\
 \hline
 \vfil \vspace{.3in}Non-eternal & \begin{itemize}[leftmargin=.4cm]
     \item The Angenent oval~\cite{Ang}
     \item The ancient ovals of White \cite{W2} and Haslhofer-Hershkovits \cite{HH1}
     \item The ``ancient pancakes'' in a slab due to Bourni-Langford-Tinaglia~\cite{BLT} and Wang \cite{W}
 \end{itemize}    &\begin{itemize}[leftmargin=.4cm]
     \item Glued grim reapers forming immersed ``ancient trombones'' in $\mathbb{R}^2$~\cite{AngYou}
     \item The embedded examples of Corollary \ref{nonconvex ancient solutions} in $\mathbb{R}^{n+1}$, $n \geq 2$
 \end{itemize} \\
 \hline
\end{tabular}
\end{table}
\end{center}

In Table \ref{table 1}, we give a survey of the known non-soliton ancient solutions according to whether they are eternal or non-eternal and whether they are convex or nonconvex. We restrict ourselves to the codimension one case in Euclidean $\mathbb{R}^{n+1}$. Note that any ancient solution can be extended to higher dimensions by simply multiplying by isometric factors of $\mathbb{R}^k$, although we leave out these possibilities. In fact, any weakly convex ancient solution can be taken to be a strictly convex ancient solution multiplied by $\mathbb{R}^k$~\cite{HS}.
$\medskip$

Our first theorem, inspired in part by the interesting work of Choi-Mantoulidis \cite{ChM}, concerns the existence of ancient solutions flowing out of certain unstable minimal surfaces in $\mathbb{R}^{n+1}$ for $n \geq 2$. These provide examples for the bottom right box in Table \ref{table 1}. In the following theorem, we will need a technical assumption, which is satisfied by a large class of minimal surfaces. We say that a surface $M$ satisfies the $\textit{uniform tubular neighborhood assumption}$ if there exists a tubular neighborhood of uniform width such that the boundary of this tubular neighborhood is smooth and embedded. That is, there exists some $\delta>0$ such that $M+\delta\nu$ is smooth and embedded, where $\nu$ is a unit normal. In other words, $M$ satisfies the uniform tubular neighborhood assumption if it does not asymptotically approach itself. 

\begin{thm}\label{first theorem} For $n \geq 2$, let $M^n \subset \mathbb{R}^{n+1}$ be an unstable\footnote{In line with \cite{FCS}, we mean than on some bounded domain $D \subset M$, $\lambda_1(D) < 0$ for the Jacobi operator.} 2-sided properly embedded minimal surface which has uniformly bounded curvature, satisfies the uniform tubular neighborhood assumption (as defined above), and satisfies one of the following options: 
\begin{enumerate}
\item it is asymptotically flat, or
\item it is periodic with compact fundamental domain, or 
\item it is periodic and asymptotically flat in its fundamental domain.
\end{enumerate}

Then, there exist two distinct ancient solutions $M^1_t$ and $M^2_t$ to mean curvature flow such that $M^1_t$ and $M^2_t$ smoothly and uniformly converge to $M$ from opposite sides of $M$ as $t \to -\infty$. These ancient solutions are embedded and have a sign on mean curvature yet are nonconvex and are not solitons. 
\end{thm}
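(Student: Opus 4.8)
The plan is to construct $M^1_t$ and $M^2_t$ as limits of a sequence of flows starting from compact perturbations of $M$ that are pushed slightly to one side. First I would fix the unit normal $\nu$ and, using the instability hypothesis, produce an initial hypersurface $N_0$ lying strictly to one side of $M$ (say on the $+\nu$ side) whose mean curvature vector points away from $M$ — concretely, take $N_0 = \mathrm{graph}(\varepsilon \varphi_1)$ over a large bounded domain $D$ where $\varphi_1 > 0$ is the first Jacobi eigenfunction with $\lambda_1(D) < 0$, capped off and glued to $M$ itself outside a slightly larger domain; the linearized analysis shows that for $\varepsilon$ small this $N_0$ is a strict subsolution (mean curvature has a sign), so the flow $N_t$ starting from it moves monotonically toward $M$. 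The uniform tubular neighborhood assumption guarantees that $M + \delta\nu$ is a genuine embedded barrier, so $N_t$ stays trapped in the slab between $M$ and $M+\delta\nu$ for all time it exists, hence exists for all $t \ge 0$; one then needs avoidance-principle arguments, using translates of $M$ and the three structural hypotheses (asymptotic flatness / periodicity with compact fundamental domain / periodicity plus asymptotic flatness) to get curvature estimates that are uniform in space, so the flow exists smoothly for all forward time and converges smoothly to $M$ as $t \to +\infty$.

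Next I would run the standard ancient-solution construction: take an exhaustion $D_j \uparrow M$, build initial data $N_0^{(j)}$ as above on $D_j$ with $\varepsilon = \varepsilon_j$ chosen so that $N_0^{(j)} \to M$, and translate each flow in time so that it reaches a fixed small distance from $M$ at time $t = 0$. Equivalently, let $N_t^{(j)}$ be the flow from $N_0^{(j)}$ and set $M_t^{1,(j)} = N_{t + s_j}^{(j)}$ where $s_j \to \infty$ is chosen to normalize, say, $\sup \mathrm{dist}(M_0^{1,(j)}, M) = 1$. The uniform curvature bounds (from barriers plus the structural hypothesis, via Ecker–Huisken-type local estimates) let me pass to a subsequential limit $M^1_t$ defined for all $t \in (-\infty, \infty)$ — this is the eternal/ancient solution. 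Monotonicity of each $N_t^{(j)}$ toward $M$ and the normalization prevent the limit from being trivial (i.e. from being $M$ itself), so $M^1_t$ is a nonflat ancient solution lying strictly on the $+\nu$ side of $M$ with $M^1_t \to M$ smoothly as $t \to -\infty$. Repeating with $-\nu$ gives $M^2_t$ on the other side; since they lie on opposite sides of $M$ they are distinct.

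It remains to verify the qualitative claims. The sign on mean curvature is inherited from the subsolution property and preserved in the limit (the strong maximum principle rules out $H \equiv 0$ since the limit is not minimal). Nonconvexity: because $M^i_t$ converges to the unstable minimal surface $M$, which is noncompact and not a hyperplane, for $t$ very negative $M^i_t$ is $C^2$-close to $M$ on large compact sets and therefore inherits a region where the second fundamental form is indefinite — a convex hypersurface cannot be $C^2$-close to a piece of a nonflat minimal surface. Finally, not a soliton: a translating, rotating, or homothetically shrinking/expanding soliton that is ancient and has a sign on $H$ and bounded curvature is highly constrained; here I would argue that any soliton asymptotic to $M$ as $t \to -\infty$ would force $M$ to be the corresponding soliton's "time $-\infty$" slice, which (for translators this means $M$ minimal and the soliton equal to $M \times$ translation, contradicting that $H$ has a strict sign; for shrinkers/expanders the backward limit is a cone or point, not $M$) contradicts either the sign of $H$ or the geometry of $M$. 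The main obstacle I expect is the uniform curvature estimate in the non-compact directions — controlling the flow near infinity so that the limit is smooth and nontrivial — which is exactly why the three structural hypotheses (asymptotic flatness and/or periodicity) are imposed, and where barrier arguments with translated copies of $M$ and with the ends' model geometries do the real work.
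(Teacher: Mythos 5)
The central gap is the forward ``escape'' step, and your write-up is internally inconsistent exactly there. With the convention $\partial_t F = H\nu$, the graph of $\varepsilon\varphi_1$ over $M$ has $H \approx \varepsilon(\Delta\varphi_1 + |A|^2\varphi_1) = -\varepsilon\lambda_1\varphi_1 > 0$ with respect to the normal $\nu$ it was pushed along, so such a perturbation moves monotonically \emph{away} from $M$, not toward it; your first paragraph asserts both directions (``mean curvature vector points away from $M$'' versus ``moves monotonically toward $M$ \dots converges smoothly to $M$ as $t\to+\infty$''), and the ``toward'' reading makes your normalization impossible, since a flow starting at distance $\sim\varepsilon_j$ from $M$ and approaching $M$ never attains $\sup\mathrm{dist}(M^{1,(j)}_0,M)=1$. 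What the construction actually requires is that for every sufficiently small one-sided perturbation the flow reaches a \emph{fixed} distance $\epsilon_1$ from $M$ in finite time $T_j$, so that recentering time by $T_j\to\infty$ yields a nontrivial ancient limit converging to $M$ as $t\to-\infty$. Nothing in your proposal proves this: the unstable mode grows only while the perturbation is in the linear regime, and a priori the monotone flow could converge forward in time to a minimal surface trapped at distance much less than $1$ from $M$. This is precisely where the paper uses instability --- not to build initial data, but to rule out such trapping: if escape failed at every scale $\epsilon_1/2^k$, one would get a sequence of minimal surfaces converging to $M$ from one side, hence a positive Jacobi field on $M$ as in Simon, hence $\lambda_1(D)>0$ on bounded domains by Fischer-Colbrie and Schoen, contradicting instability. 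Your proposal has no substitute for this lemma.

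Two further gaps. First, for your glued initial data the claimed sign of $H$ is not justified: in the transition region where $\mathrm{graph}(\varepsilon\varphi_1)$ is capped off to $M$, the cutoff contributes second-derivative terms not dominated by $-\varepsilon\lambda_1\varphi_1$ (there $\varphi_1$ is small), so ``strict subsolution'' fails as stated. The paper sidesteps this by using the global equidistant perturbation $M+\delta\nu$ (smooth and embedded by the uniform tubular neighborhood assumption) and proving its weak mean convexity indirectly: if $H<0$ somewhere, the flow would move closer to $M$, contradicting the preserved separation $\delta$ from the noncompact avoidance principle, and Ecker--Huisken then preserves mean convexity. Second, normalizing $\sup\mathrm{dist}(\cdot,M)=1$ at $t=0$ does not by itself prevent the limit from being $M$: the points realizing that distance may escape to spatial infinity, in which case the local limit on compact sets is $M$ itself (or, after recentering, a flow out of a plane). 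The paper recenters at points $p_j$ where the flow meets $M_{\epsilon_1}$ and shows the $p_j$ stay bounded in the asymptotically flat case via an avoidance argument against a translated plane; your scheme needs some version of this. Finally, the curvature estimate in the slab needs an actual mechanism (the paper uses Brakke--White regularity for graphs over $M$ with small $C^1$ norm), and the limit is only claimed ancient, not eternal --- forward long-time existence after the flow leaves the slab is neither established nor needed. Your nonconvexity and non-soliton discussions are fine in spirit.
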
 

In the proof of Theorem \ref{first theorem}, the assumptions of asymptotic flatness or periodicity plays a subtle role in the asymptotics of the ancient solution, but note that these assumptions include a wide class of minimal surfaces. Some nontrivial examples of minimal surfaces satisfying the assumptions of this theorem are catenoids and the Costa-Hoffman-Meeks surfaces. Note also that minimal surfaces with infinitely many ends such as the Riemann examples are covered by item (3).  
$\medskip$
 
The ancient solutions constructed in Theorem \ref{first theorem} seem to be the first known instances of nonconvex embedded ancient solutions in $\mathbb{R}^{n+1}$ which are not solitons. To the authors' knowledge the only previously constructed nonconvex non-eternal ancient solutions that are not solitons are the immersed curves in $\mathbb{R}^2$ of Angenent and You \cite{AngYou}. In fact, to the authors' knowledge, all previously known mean convex non-eternal non-soliton ancient solutions to mean curvature flow have been convex. Since the ancient solutions of Theorem \ref{first theorem} have a sign on mean curvature, i.e. they are mean convex with the correctly chosen normal field, we have the following corollary.

\begin{cor}\label{nonconvex ancient solutions} There exist mean convex, yet nonconvex, non-soliton ancient solutions to mean curvature flow in $\mathbb{R}^{n+1}$, $n \geq 2$.  
\end{cor}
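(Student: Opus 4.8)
The plan is to deduce the corollary directly from Theorem \ref{first theorem} by exhibiting, for each $n \geq 2$, a minimal hypersurface of $\mathbb{R}^{n+1}$ satisfying all of its hypotheses. The canonical choice is the $n$-dimensional catenoid $C^n \subset \mathbb{R}^{n+1}$, the (essentially unique) nonplanar $O(n)$-invariant minimal hypersurface, obtained by rotating a suitable profile curve about an axis. First I would record the basic geometry of $C^n$: it is a connected, properly embedded, orientable (hence $2$-sided) hypersurface with exactly two ends, each of which is asymptotically flat, and its second fundamental form is uniformly bounded, attaining its maximum along the central neck sphere. For $n \geq 3$ each end converges to a hyperplane at a polynomial rate, while for $n = 2$ each end is a logarithmic graph over a plane with curvature tending to zero; in either case option (1) (asymptotic flatness) of Theorem \ref{first theorem} holds and the uniformly bounded curvature hypothesis is met.

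Next I would verify the remaining two hypotheses. For instability in the sense of \cite{FCS}: on a sufficiently large geodesic ball $D \subset C^n$ centered on the neck, the first Dirichlet eigenvalue $\lambda_1(D)$ of the Jacobi operator $\Delta + |A|^2$ is negative. This is classical for $n = 2$, and in general follows because the bottom of the spectrum of the Jacobi operator on $C^n$ is strictly negative (equivalently $C^n$ has index at least one), which one can see by an explicit compactly supported test function concentrated near the neck where $|A|^2$ is largest. For the uniform tubular neighborhood assumption, the two ends of $C^n$ separate and each becomes graphical and nearly flat at infinity, while the compact neck region is embedded with bounded geometry; hence there is a uniform $\delta > 0$ with $C^n + \delta \nu$ smooth and embedded, i.e. $C^n$ does not asymptotically approach itself.

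With all hypotheses in place, Theorem \ref{first theorem} produces two distinct ancient solutions $M^1_t$ and $M^2_t$ converging to $C^n$ from opposite sides as $t \to -\infty$, each embedded, with a sign on mean curvature (hence mean convex with the correct choice of normal), nonconvex, and not a soliton. This proves the corollary. I would add a remark that the same reasoning applies verbatim to the Costa--Hoffman--Meeks surfaces via option (1), and, using options (2) and (3), to the Riemann examples and other periodic minimal hypersurfaces, so that the conclusion is witnessed by a broad family of examples rather than a single one.

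The only point requiring genuine care — everything else being either classical or immediate from Theorem \ref{first theorem} — is the simultaneous check that the chosen minimal surface satisfies the uniform tubular neighborhood assumption together with one of the asymptotic options in precisely the form used in the proof of Theorem \ref{first theorem}; for the catenoid this is straightforward, but it is the step where one must actually invoke the specific geometry of the ends rather than cite a black box.
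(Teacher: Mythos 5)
Your proposal is correct and follows essentially the same route as the paper: the corollary is an immediate consequence of Theorem \ref{first theorem}, whose ancient solutions have a sign on mean curvature (hence are mean convex with the right normal) yet are nonconvex and non-solitons, applied to a minimal surface satisfying the hypotheses, with the catenoid (and Costa--Hoffman--Meeks surfaces) being exactly the examples the paper names. Your more explicit verification of instability and the uniform tubular neighborhood assumption for the catenoid is a fine, if classical, elaboration of what the paper leaves implicit.
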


Our second theorem concerns the existence of an eternal solution flowing out of the catenoid in $\mathbb{R}^{n+1}$ for each $n \geq 2$. This provides an example for the top right box in Table \ref{table 1}. In the following theorem, let $M^1$ be a catenoid in $\mathbb{R}^{n+1}$ for $n \geq 2$. Center $M^1$ so that it is rotationally symmetric about an axis passing through the origin. The catenoid splits $\mathbb{R}^{n+1}$ into two connected components, the ``inside'' and the ``outside''. Let $\nu$ be the unit normal on the neck of the catenoid such that $\nu$ points away from the origin. Then, let the ``outside'' of the catenoid be the connected component that $\nu$ points into.

\begin{thm}[The Reapernoid]\label{second theorem}
For each $n \geq 2$, there exists a mean convex\footnote{This is with respect to the normal on $M^1_t$ compatible, as $t \to -\infty$, with the normal $\nu$ to the catenoid $M^1$ (see the discussion above the theorem).}  $O(n) \times O(1)$-invariant eternal solution $M^1_t$ to mean curvature flow in $\mathbb{R}^{n+1}$ with uniformly bounded curvature such that for each $t$, $M^1_t$ is a subset of the outside of the catenoid $M^1$ (as defined above) and $M^1_t$ converges smoothly and uniformly to $M^1$ as $t \to -\infty$.

As $t \to \infty$, $M^1_t$ becomes infinitely far from its axis of rotation. For $n \geq 3$, the profile curve of $M^1_t$ will converge as $t \to \infty$ to a grim reaper of the same width as $M^1$. For $n=2$, the pointed limit of the profile curve of $M^1_t$ is a line and the curvature of $M^1_t$ approaches zero as $t \to \infty$.
\end{thm}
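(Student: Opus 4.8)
The plan is to construct the eternal solution $M^1_t$ as a limit of rescaled ancient solutions together with a direct barrier argument. First I would apply Theorem \ref{first theorem} to the catenoid $M^1$ (which is $O(n)\times O(1)$-invariant, has bounded curvature, satisfies the tubular neighborhood assumption, and is asymptotically flat as the ends are graphical over a plane with curvature decaying), obtaining the ancient solution lying on the outside of the catenoid with a sign on mean curvature. By uniqueness of rotationally symmetric mean curvature flow (and uniqueness of the limit from the construction in Theorem \ref{first theorem}), this solution inherits the $O(n)\times O(1)$-symmetry of $M^1$, so it can be described by a profile curve $\gamma_t$ in the half-plane. The goal is then to show this ancient solution is in fact eternal and to analyze $t\to\infty$.

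For the eternal claim, the key observation is a barrier/avoidance argument: the profile curve starts (as $t\to-\infty$) as the catenoid profile, a graph over an interval of the axis which becomes infinitely steep; as the flow moves the surface outward (mean convexity with the outward normal pushes it away from the origin and the neck widens), the surface stays trapped between the catenoid $M^1$ on one side and, on the other side, a family of translating or self-similar barriers that exist for all time. Concretely, one shows the neck radius $r(t) = \min \operatorname{dist}(M^1_t, \text{axis})$ is monotone increasing and that the flow cannot develop a singularity: curvature bounds follow either from pseudolocality applied near the nearly-flat ends together with a global bound near the neck coming from comparison with shrinking/translating barriers, or from the fact that any finite-time singularity of an embedded rotationally symmetric mean convex flow would be a neckpinch, which is excluded since $r(t)$ is increasing and bounded below by the initial neck radius. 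So $M^1_t$ exists on $(-\infty,\infty)$. I expect this exclusion of singularities — making rigorous that the neck cannot pinch and the ends cannot develop curvature blowup — to be the main obstacle, and the natural tools are Angenent's classification of rotationally symmetric singularities, the sterility of the outer region via comparison with planes/catenoids, and possibly a Sturmian/intersection-counting argument on the profile curve.

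Next I would analyze the asymptotics as $t\to\infty$. Since $r(t)\to\infty$ (otherwise the flow would subconverge to a nonflat rotationally symmetric eternal solution with a bounded neck, which one rules out — the only candidates are minimal, hence the catenoid, contradicting that the flow strictly moved off $M^1$, or a translator, which would have to be the winglike/translating catenoid, again excluded by the sign and symmetry constraints), the surface moves infinitely far from the axis. For $n\geq 3$, near any point at height comparable to the neck the term $(n-1)/r$ in the curvature of the profile curve (the contribution of the spherical directions to the mean curvature) tends to zero as $r\to\infty$, so the profile curve equation degenerates to the one-dimensional curve shortening flow; taking a limit along $t_i\to\infty$ and using the width constraint inherited from $M^1$ together with the translator structure of eternal curve-shortening limits, the limit profile is a grim reaper of the same width as the catenoid. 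For $n=2$, the same degeneration combined with the fact that the $1/r$ term is the entire lower-order part shows the limiting profile flattens, i.e. curvature $\to 0$; here one argues that a grim reaper limit is impossible because the $n=2$ catenoid has logarithmically growing ends whose width is not fixed in the relevant scale, forcing the pointed limit to be a line.

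Finally I would assemble: the width claim for $n\geq 3$ follows by noting the two ends of the reapernoid's profile curve stay within the slab determined by the asymptotic planes of the catenoid (using the avoidance principle against the static planes containing the catenoid's limiting ends, after recentering), so the grim reaper limit has exactly that width; and the mean-convexity and $O(n)\times O(1)$-invariance are preserved throughout by the parabolic maximum principle and uniqueness. A remark: the uniformly bounded curvature in the statement is what is actually produced by the construction — one records the explicit bound from the interior estimates and barriers — and this is precisely the hypothesis needed for the asymptotic uniqueness result that follows.
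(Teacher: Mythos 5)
Your skeleton matches the paper's (apply Theorem \ref{first theorem} to the catenoid, inherit the $O(n)\times O(1)$-symmetry and mean convexity, prove long-time existence, show the surface escapes the axis, then let the profile equation degenerate to curve shortening flow), but the steps carrying the real weight have genuine gaps. First, long-time existence: your claim that ``any finite-time singularity of an embedded rotationally symmetric mean convex flow would be a neckpinch'' is false as a general principle (a thin mean convex torus of revolution shrinks to a circle away from the axis), and even for profiles that are graphs over the axis, a lower bound on the neck radius does not by itself exclude curvature blowup at finite time --- one needs actual curvature estimates, including at the ends where the gradient of the profile blows up. The paper's mechanism is concrete and absent from your sketch: since the rotationally symmetric graphical equation $\partial_t u = u_{xx}/(1+u_x^2)-(n-1)/u$ is a subsolution of graphical curve shortening flow, a grim reaper $\mathcal{G}+ct$ of half-width $\ell<W_n$ placed above $u_0$ is a barrier for all time; trapping the convex profile between the catenoid and this translating barrier gives time-dependent gradient bounds near the tip, and Ecker--Huisken interior estimates applied over a horizontal hyperplane at the tip and over tilted hyperplanes $\mathcal{H}_\theta$ along the steep part give $|A|^2\le C(t)$, hence existence on $[0,\infty)$.

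Second, the uniform-in-time curvature bound, which is part of the statement and is what makes the pointed limits as $t\to\infty$ possible, is not produced by these interior estimates (they are only time-dependent); the paper proves it by a separate blowup argument: rescale by the curvature supremum, use the escape from the axis to kill the $(n-1)/u$ term, and contradict the Bourni--Langford--Tinaglia classification of convex ancient (not eternal) graphical solutions of curve shortening flow. Your appeal to ``the translator structure of eternal curve-shortening limits'' is the wrong tool --- the limits you can extract are a priori only ancient --- and your width argument via avoidance against the static planes $\{x=\pm W_n\}$ only bounds the width from above; the exact width $W_n$ comes from the fact (via pseudolocality) that $M^1_t$ remains asymptotic to $M^1$ for all time. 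Finally, your argument that the neck radius tends to infinity (subconvergence to a catenoid or a translator) implicitly assumes uniform curvature bounds that are not yet available at that stage; the paper must separately rule out curvature blowup near the tip with a stationary tip, which it does by locating an interior minimum of $H$ between symmetric high-curvature points and invoking the strict maximum principle to force $H$ at the tip away from zero, a contradiction. Filling these four points --- the subsolution/grim-reaper barrier with Ecker--Huisken, the uniform bound via blowup plus the ancient-CSF classification, the asymptotic pinning of the width, and the tip-blowup exclusion --- is what the proof actually consists of.
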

\begin{figure}
\centering
\includegraphics[scale = .6]{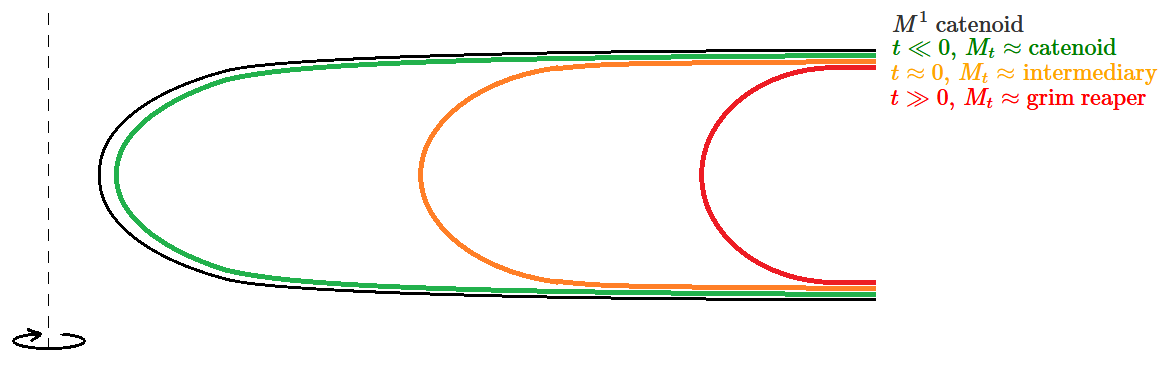}
\caption{A sketch of the regimes of the profile curves of the higher-dimensional reapernoid, the eternal solution of Theorem \ref{second theorem}. For $t \ll 0$, the eternal solution has a profile curve close to that of the catenoid, and for $t \gg 0$, it has a profile curve close to that of a grim reaper.}
\end{figure}

By a theorem of Richard Hamilton~\cite{Ham}, a strictly convex eternal solution which achieves its spacetime maximum of curvature must be a translating soliton. The eternal solution constructed in Theorem \ref{second theorem}, which will be referred to as the reapernoid as a reminder of its asymptotics, seems to be the first known instance of a non-soliton eternal solutions in $\mathbb{R}^{n+1}$, $n\geq 2$, which does not split off a line. To the authors' knowledge, the only previously known non-soliton eternal solutions to mean curvature flow are curves in $\mathbb{R}^2$ or their isometric products with $\mathbb{R}^k$. Thus, we find the following corollary.

\begin{cor}\label{nonconvex eternal}
For $n \geq 2$, there exists an eternal solution to mean curvature flow in $\mathbb{R}^{n+1}$ that is not a soliton and does not split off a line.
\end{cor}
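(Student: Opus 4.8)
The plan is to read the corollary off from the reapernoid of Theorem~\ref{second theorem}, treating its genuinely hard content---existence of the eternal solution together with its precise asymptotics as $t\to\pm\infty$---as given. So let $M^1_t\subset\mathbb{R}^{n+1}$ be that solution: it is $O(n)\times O(1)$-invariant with uniformly bounded curvature, converges smoothly and uniformly to the catenoid $M^1$ as $t\to-\infty$, and as $t\to\infty$ becomes infinitely far from its axis of rotation, its profile curve limiting to a grim reaper when $n\geq 3$ and to a line (with $\sup|A|\to 0$) when $n=2$. The two assertions to verify are that $M^1_t$ is not a soliton and that it does not split off a line.

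For the soliton assertion, I would first note that an \emph{eternal} soliton cannot carry a genuine homothetic factor: along mean curvature flow the square of the homothety ratio $\lambda(t)$ is affine in $t$, so a nonconstant $\lambda$ vanishes at a finite time in one time direction and produces a singularity there, violating eternality. Hence an eternal soliton must evolve by rigid motions alone, i.e. be static, a translator, a pure rotator, or a screw-motion soliton (rotation combined with translation along the rotation axis). Each of these I would rule out from the asymptotics of Theorem~\ref{second theorem}: a static flow cannot become infinitely far from its axis; a translator or screw-motion soliton escapes to spatial infinity as $t\to-\infty$ and cannot converge to the fixed catenoid $M^1$; and a genuine rotator never converges as $t\to-\infty$ (it keeps spinning, and if it does not spin it is static). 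As a cleaner and more robust alternative, I would use that a rigid-motion soliton has all timeslices mutually congruent, hence a time-independent $\sup|A|$ and a time-independent ``width''; but Theorem~\ref{second theorem} says these change---$\sup|A|\to 0$ for $n=2$ while the $t\to-\infty$ limit (the catenoid) has $\sup|A|>0$, and for $n\geq 3$ the profile curves are eventually confined to a slab (a grim reaper of the width of $M^1$) whereas their $t\to-\infty$ limit, the catenary, is not. Either way, $M^1_t$ is not a soliton.

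For the splitting assertion, suppose toward a contradiction that $M^1_t=N^{n-1}_t\times\ell$ for a line $\ell$ and a flow $N_t$ in the orthogonal hyperplane. The quickest route combines this with the $O(n)$-symmetry, which forces $N_t$ to be a round sphere and hence $M^1_t$ to be a shrinking cylinder $S^{n-1}_{\rho(t)}\times\mathbb{R}$---not eternal. Even without the symmetry, smooth uniform convergence transmits a line splitting to the limit, so the catenoid $M^1$ would itself be a Riemannian product with $\mathbb{R}$, which it is not (for $n=2$ its Gauss curvature is strictly negative, and in every dimension it is nowhere flat). This contradiction finishes the proof.

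The real difficulty lies not in this corollary but in Theorem~\ref{second theorem}; the corollary is a short epilogue once the reapernoid's asymptotics are in hand. Within the corollary itself, the one point demanding care is the soliton dichotomy---one must be sure to exclude rotating and screw-motion solitons, which a priori could be unbounded and share the rotational structure of the reapernoid---and the curvature/confinement argument above is the safe way to do that.
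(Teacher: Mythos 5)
Your proposal is correct and follows the paper's route: the corollary is read off directly from Theorem \ref{second theorem}, with the non-soliton property coming from the asymptotics just as in the argument at the end of the proof of Theorem \ref{first theorem} (the paper kills homothety via the backward convergence to the minimal surface rather than via eternality, a cosmetic difference), and the non-splitting is left essentially immediate from the $O(n)\times O(1)$-symmetry and the convergence to the catenoid. One caution about your labeled ``alternative'': for $n \geq 3$ the catenoid's profile curve \emph{is} confined to a slab of half-width $W_n < \infty$, and the limiting grim reaper has that same width by Theorem \ref{second theorem}, so the width comparison there does not distinguish the two regimes; your primary rigid-motion case analysis (together with the $\sup|A|$ comparison, which as stated is only verified for $n=2$) is what actually carries the soliton exclusion.
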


Our final result concerns a partial uniqueness statement for the reapernoid eternal solution of Theorem \ref{second theorem}. Its proof is not particularly difficult but naturally leads into a host of further questions which we discuss in the concluding remarks:

\begin{thm}\label{uniqueness} Suppose $M_t$ is a connected embedded nonflat eternal solution to mean curvature flow in $\mathbb{R}^{n+1}$ which
\begin{enumerate}
\item is $O(n) \times O(1)$-invariant,
\item has a sign on mean curvature, and
\item has uniformly bounded curvature for all time.
\end{enumerate}

Then $M_t$ is either the catenoid itself or it has the asymptotics of the eternal solution of Theorem \ref{second theorem} up to scale. That is, $M_t$ converges to the catenoid from the outside as $t \to -\infty$, and as $t \to \infty$, the profile curve of $M_t$ converges to a grim reaper for $n\geq 3$ or becomes flat for $n=2$.
\end{thm}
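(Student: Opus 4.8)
The plan is to reduce everything to the analysis of the profile curve and then exploit the $O(n)\times O(1)$-invariance together with the sign on mean curvature to pin down the two possible asymptotic regimes as $t\to\pm\infty$. First I would write $M_t$ as the rotation of a profile curve $\gamma_t$ lying in a half-plane, parametrized (where possible) as a graph $r = u(x,t)$ over the axis of rotation, or more robustly by arclength with curvature $\kappa_t$. The mean curvature of $M_t$ is $H = \kappa - (n-1)\tfrac{\text{(normal component)}}{r}$ in the usual way for $O(n)\times O(1)$-invariant hypersurfaces, so a sign on $H$ forces a one-sided differential inequality on the profile curve. The uniformly bounded curvature hypothesis gives, via standard interior estimates for mean curvature flow, uniform bounds on all derivatives of the curvature of $M_t$ on any fixed time interval, hence smooth subsequential convergence of time-translates $M_{t+s_j}$ as $s_j\to\pm\infty$ to eternal solutions with the same symmetry and the same sign on $H$.

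Next I would classify these limit flows. As $t\to-\infty$, I expect a rigidity argument: a time-translate limit is an ancient (in fact eternal) $O(n)\times O(1)$-invariant solution with a sign on $H$ and bounded curvature; a monotonicity/maximum-principle argument on the profile curve (the neck radius is monotone under the sign of $H$, and the curvature integral along the profile is controlled) forces the limit to be stationary, i.e. a minimal $O(n)\times O(1)$-invariant hypersurface in $\mathbb{R}^{n+1}$. The only such complete embedded nonflat examples are the catenoids (and hyperplanes, excluded by nonflatness), so either $M_t$ is itself the catenoid — the case where the backward limit is reached for all $t$ and a strong maximum principle / unique continuation forces equality — or $M_t$ converges to the catenoid from one side as $t\to-\infty$, and the sign on $H$ together with the definition of ``outside'' identifies the side. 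As $t\to+\infty$, I would argue that the neck radius must increase without bound (if it stayed bounded, one could again extract a stationary, hence catenoid, limit, contradicting that $M_t$ is a nonstationary eternal solution that already agreed with the catenoid in the past by the strong maximum principle). Once the profile curve escapes to infinity from the axis, the $(n-1)/r$ term in $H$ becomes negligible on compact pieces of the (suitably recentered) profile curve, so a time-translate limit of the recentered flow is an eternal solution of curve shortening flow in the plane with a sign on curvature and bounded curvature. For $n\ge 3$ the translating structure inherited from the vanishing lower-order term, together with the width constraint coming from the fixed ``width'' of the catenoid (the asymptotic separation of the two ends is preserved), singles out the grim reaper of that width by the classification of convex eternal curve shortening flows / translators in a slab; for $n=2$ the term is absent and the limit is a line, i.e. the curvature decays to zero.

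The main obstacle I anticipate is ruling out ``bad'' backward limits and controlling the geometry uniformly enough to run the recentering argument as $t\to+\infty$ — specifically, showing the neck radius genuinely diverges rather than oscillating or converging, and showing that the profile curve stays a graph (or controlled multigraph) long enough that the rescaled/recentered limit is a genuine embedded eternal curve flow rather than something degenerate. This is where the sign on $H$ and the uniform curvature bound do the real work: the sign on $H$ makes the neck radius monotone (so it has a limit in $[r_0,\infty]$), and bounded curvature rules out the neck radius limit being finite and positive by the stationary-limit argument, while finite-and-zero is excluded by embeddedness plus the strong maximum principle against the catenoid. The grim-reaper identification for $n\ge 3$ also requires knowing the limit is convex; this should follow either from a Sturmian/intersection-number argument bounding the number of critical points of the profile curve, or by noting that the limit curve flow has a sign on curvature and bounded curvature and is eternal, hence — being a solution of curve shortening flow that is a translator in the limit — is the grim reaper by the known classification, with the width fixed by the conserved asymptotic data of the catenoid.
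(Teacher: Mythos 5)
Your overall strategy matches the paper's (profile curve, backward limit is a coaxial minimal surface, forward escape from the axis, recentered limit is an ancient curve shortening flow, Bourni--Langford--Tinaglia classification), but there is a genuine gap: you never treat the case in which the sign of $H$ is the one for which the flow moves \emph{toward} the axis, i.e. $M_t$ is mean convex with respect to the inward-pointing normal. The hypothesis ``has a sign on mean curvature'' allows either orientation, and for the inward sign your backward-limit step fails outright: the neck radius is then increasing backward in time, so time-translates as $t\to-\infty$ need not converge to a stationary (minimal) surface at all; instead such a solution would, by your own monotonicity reasoning, converge to a catenoid as $t\to+\infty$ and drift infinitely far from the axis as $t\to-\infty$ --- asymptotics that contradict the statement being proved. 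This case must therefore be \emph{excluded}, not merely ignored. The paper does this in a separate lemma: if such an $M_t$ existed it would converge to a catenoid $N$ as $t\to+\infty$; take a coaxial catenoid $N^*$ of larger radius, which necessarily intersects $N$ transversally but is disjoint from $M_t$ for $t\ll 0$ (since $M_t$ is then far from the axis), so by the avoidance principle $M_t$ stays disjoint from the static barrier $N^*$ for all time and can never converge to $N$ --- a contradiction. Without this (or an equivalent) argument your proof does not establish the stated asymptotics. The same coaxial-intersection property (any two distinct catenoids, or a catenoid and a plane, sharing the same rotation axis and reflection hyperplane must intersect) is also what cleanly forces the backward limit to be a \emph{single} catenoid and what makes the forward neck-divergence argument rigorous; your appeal to ``already agreed with the catenoid in the past by the strong maximum principle'' does not do this work, and excluding hyperplane limits ``by nonflatness'' is not valid either (nonflatness of $M_t$ does not transfer to the limit; planes are excluded because the profile stays a uniform distance from the axis, which rules out the profiles $\{x=a\}$).

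Two smaller points. First, you claim the forward limit is a grim reaper of the \emph{same} width as the catenoid, justified by ``the asymptotic separation of the two ends is preserved''; no such preservation is available for a general solution satisfying only the theorem's hypotheses (in the construction of Theorem \ref{second theorem} it came from pseudolocality applied to the specific approximating flows), and indeed the paper explicitly does not claim width equality in Theorem \ref{uniqueness} --- fortunately the statement does not require it. Second, your $n=2$ reasoning (``the term is absent'') is wrong: for $n=2$ the rotational term $-(n-1)/u$ is present; the reason the limit is a line rather than a grim reaper is that for $n=2$ the catenoid has infinite width, so the limiting convex ancient curve shortening flow is not contained in any slab, and the classification then forces it to be flat, whereas for $n\geq 3$ the solution is trapped in the slab determined by the backward-limit catenoid, which is what yields a grim reaper.
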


In other words, this shows that the reapernoid, the eternal solution of Theorem \ref{second theorem}, is asymptotically unique among $O(n)\times O(1)$-invariant eternal flows with uniformly bounded curvature and a sign on mean curvature. Note that the translating bowl soliton is excluded from the above conditions because it is $O(n)$-invariant but not $O(n)\times O(1)$-invariant. Also, in Theorem \ref{uniqueness}, we do not prove that the grim reapers found in the limit as $t \to \infty$ are necessarily of the same width as the catenoid, as in Theorem \ref{second theorem}.
$\medskip$

$\textbf{Acknowledgements:}$  The authors would like to thank Kyeongsu Choi and Christos Mantoulidis for responding to comments and writing an interesting and inspiring paper~\cite{ChM} where they construct ancient flows out of compact minimal surfaces in non-Euclidean ambient spaces, using different methods than those of this paper. The authors would also like to thank Mat Langford and Shengwen Wang for their comments and suggestions. Finally, the authors thank their advisors, Richard Schoen and Bruce Kleiner, respectively, for their support and advice.

\section{Preliminaries, Old and New}
In this section we collect some standard and nonstandard facts on mean curvature flow and minimal surfaces, which will be used later to streamline the proofs. Let $M$ be an $n$-dimensional orientable manifold and let $F: M \to \mathbb{R}^{n+1}$ be an embedding of $M$ realizing it as a smooth closed $2$-sided hypersurface of Euclidean space, which by abuse of notation we also refer to as $M$. Then the mean curvature flow $M_t$ is given by the image of $\hat{F}: M \times [0,T) \to \mathbb{R}^{n+1}$ satisfying
\begin{equation}\label{MCF equation}
\frac{d\hat{F}}{dt} = H \nu, \text{ } \hat{F}(M, 0) = F(M)
\end{equation}
where $\nu$ is a unit normal and $H$ is the mean curvature. It turns out that (\ref{MCF equation}) is a nonlinear heat-type equation, since for $g$ the induced metric on $M$,
\begin{equation}
 \Delta_g F = g^{ij}(\frac{\partial^2 F}{\partial x^i \partial x^j} - \Gamma_{ij}^k \frac{\partial F}{\partial x^k}) = g^{ij} h_{ij} \nu = H\nu
\end{equation}
That the left-hand side is the Laplacian motivates the assertion that the mean curvature flow is the natural analogue of the heat equation in submanifold geometry. One can easily see that the mean curvature flow equation (\ref{MCF equation}) is degenerate. Despite this, solutions to (\ref{MCF equation}) always exist for short time and are unique provided that the initial data has bounded second fundamental form. There are several ways to deduce this by relating (\ref{MCF equation}) to a nondegenerate parabolic PDE. Solutions to mean curvature flow satisfy many properties that solutions to heat equations do, such as the maximum principle and smoothing estimates. One important consequence of the maximum principle is the comparison principle (also known as the avoidance principle), which says that two initially disjoint hypersurfaces will remain disjoint over the flow. More generally, for two noncompact hypersurfaces with uniformly bounded geometry, if the flows $M^1_t$ and $M_2^t$ are initially distance $\delta > 0$ apart they remain so under the flow (see, for instance, Remark 2.2.8 of \cite{Mant}). In fact, in the cases of interest to us in this paper, such a comparison principle for two noncompact hypersurfaces can be proven independently. We are interested in applying the comparison principle between noncompact hypersurfaces of uniformly bounded geometry that are either asymptotically flat or periodic (or a combination of both). Indeed, between periodic surfaces with compact fundamental domain, the standard comparison principle generalizes immediately. And for asymptotically flat surfaces, pseudolocality (see Chen-Yin \cite{CY}) keeps the ends arbitrarily stationary, meaning that the separated flows must have an interior minimum of distance if they approach each other.
$\medskip$

Now we give some preliminary facts more specific to the proofs below:
 
\subsection{Preliminaries for the proof of Theorem 1.1}
$\medskip$

Singularities along the flow can only occur at points and times where the norm of the second fundamental form $A$ blows up. Hence, to rule out singularities, we need curvature estimates. Our method to find curvature estimates in this section is the Brakke-White regularity theorem \cite{B}. The version of this theorem stated below is due to Brian White \cite{W1}: 
\begin{thm}[Brakke, White]\label{Brakke} There are numbers $\epsilon_0 = \epsilon_0(n) > 0$ and $C = C(n) < \infty$ with the following property. If $\mathcal{M}$ is a smooth mean curvature flow starting from a hypersurface $M$ in an open subset $U$ of the spacetime $\mathbb{R}^{n+1} \times \mathbb{R}$ and if the Gaussian density ratios $\Theta(M_t, X, r)$ are bounded above by $1 + \epsilon_0$ for $0 < r< \rho(X,U)$, then each spacetime point $X = (x,t)$ of $\mathcal{M}$ is smooth and satisfies:
 \begin{equation}
 |A|^2 \leq \frac{C}{\rho(X, U)}
 \end{equation}
 where $\rho(X,U)$ is the infimum of $||X - Y||$ among all spacetime points $Y \in U^c$.
 \end{thm}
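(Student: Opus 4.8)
The statement is Brian White's local regularity theorem \cite{W1}, a refinement of Brakke's original regularity theorem \cite{B}; since $\mathcal{M}$ is already assumed to be a smooth flow, the real content is the curvature estimate, and the plan is to reproduce White's argument, which runs by contradiction and compactness. First one normalizes. Gaussian density ratios are invariant under the parabolic dilations $(x,t) \mapsto (\lambda x, \lambda^2 t)$, whereas $|A|^2$ rescales by $\lambda^{-2}$, so after translating $X$ to the spacetime origin and dilating so that $\rho(X,U) = 1$ it suffices to prove: there is $\epsilon_0 = \epsilon_0(n) > 0$ such that any smooth mean curvature flow defined in the parabolic ball $P = B_1(0) \times (-1,0]$ with $\Theta(\mathcal{M}, Y, r) \leq 1 + \epsilon_0$ for all $Y \in P$ and $0 < r < \rho(Y,P)$ satisfies $|A|^2(0,0) \leq C(n)$.

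Suppose this fails: there are smooth flows $\mathcal{M}_j$ in $P$ with $\Theta(\mathcal{M}_j, Y, r) \leq 1 + \tfrac{1}{j}$ in the stated range but with $|A_j|^2(0,0) \to \infty$. The density bound together with Huisken's monotonicity formula yields uniform local area bounds for the $\mathcal{M}_j$, so by Ilmanen's compactness theorem for integral Brakke flows a subsequence converges to a nonzero integral Brakke flow $\mathcal{M}_\infty$ on $P$ whose support contains the origin and whose Gaussian density ratios are $\leq 1$ everywhere (the density bound passes to the limit). On the other hand, Huisken's monotonicity formula forces $\Theta(\mathcal{M}_\infty, Y, r) \geq 1$ at every point $Y$ of the support and every admissible scale, so $\Theta(\mathcal{M}_\infty, \cdot, \cdot) \equiv 1$. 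The equality case of the monotonicity formula then forces $\mathcal{M}_\infty$ to be a static, multiplicity-one hyperplane.

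It remains, and this is the substantial step, to upgrade the weak convergence $\mathcal{M}_j \to \mathcal{M}_\infty$ to a smooth one near the origin. Since $\mathcal{M}_\infty$ is a multiplicity-one plane, for $j$ large the $\mathcal{M}_j$ are, in a slightly smaller parabolic ball, small-Lipschitz graphs over that plane; this is the content of Brakke's clearing-out/graphical decay lemma for flows whose density ratios are close to $1$. On such graphs the mean curvature flow equation is uniformly parabolic, so the parabolic analogue of Allard's regularity theorem, equivalently interior parabolic Schauder estimates for the graphical flow, gives uniform $C^\infty$ estimates; in particular $|A_j|^2(0,0) \leq C(n)$ for $j$ large, contradicting $|A_j|^2(0,0) \to \infty$. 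Undoing the dilation and translation then gives the curvature estimate at the original point $X$ with the stated dependence on $\rho(X,U)$.

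I expect the main obstacle to be exactly this last step, passing from measure-theoretic closeness to a static plane to a quantitative graphical, and hence smooth, description of the flow; this is where the regularity theory does its real work, via Brakke's original argument or White's streamlined replacement through the clearing-out lemma and Allard's theorem, whereas the monotonicity and compactness inputs used above are by comparison soft.
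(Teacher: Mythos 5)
The paper does not prove this statement: it is quoted (with attribution) from Brakke \cite{B} and White \cite{W1} and used as a black box, so there is no internal proof to compare against. Taken on its own terms, your sketch is a faithful outline of White's actual argument: parabolic normalization, a contradiction/compactness argument, Huisken monotonicity forcing the limit Brakke flow to be a static multiplicity-one plane, and the upgrade to smooth graphical convergence near the origin, which you correctly flag as the substantive step.

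Two caveats. First, as written the compactness step is slightly circular: with only $\rho=1$ normalized and $|A_j|^2(0,0)\to\infty$, Ilmanen's compactness gives convergence as Brakke flows, but deducing uniform $C^\infty$ bounds at the origin from the limit being a multiplicity-one plane is essentially the theorem one is trying to prove. White breaks the circularity with a point-selection argument: one chooses spacetime points nearly maximizing $|A|\cdot\rho$ and rescales so that $|A|=1$ at the basepoint and $|A|\le 2$ on a definite parabolic neighborhood; the flows then converge \emph{smoothly} by interior estimates, and the limit is simultaneously flat (by the density argument) and has $|A|=1$ at the origin, a contradiction. You should either insert that selection step or make explicit that the clearing-out/graphical decay lemma is being applied with constants uniform in $j$. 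Second, your normalization correctly shows the estimate must scale as $|A|^2\le C/\rho(X,U)^2$ (equivalently $|A|\le C/\rho(X,U)$), which is what White proves; the exponent in the displayed inequality of the statement is a typo, and your dilation argument is the quickest way to see it.
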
 
 
In the above theorem we recall that the Gaussian density ratio $\Theta(M_t, X, r)$ is given by
 \begin{equation}
 \Theta(M_t, X, r) = \int_{y \in M_{t - r^2}} \frac{1}{(4\pi r^2)^{n/2}} e^\frac{-|y - X|^2}{4r^2} d \mathcal{H}^n(y)
 \end{equation} 
By Huisken's monotonicity formula~\cite{H}, this quantity is monotone nondecreasing in $r$. So, to get curvature bounds via the regularity theorem, we only need to sufficiently bound a range $[r_1, r_2] \subset (0, \infty)$ of the densities in an open set $U$ for some time interval $[t_1, t_2]$ with $r_1^2 < t_2 - t_1$. 
$\medskip$

The curvature estimates in the proof of Theorem \ref{first theorem} will depend on Proposition \ref{smush}, which is proven with the Brakke regularity theorem. Note that the $C^2$ bounds on $f$ below are precisely curvature estimates for $M_t$: 

 \begin{prop}\label{smush} Let $M_t$ be a flow for $t \in [0,T)$, $T>1$. Let $N^1$ and $N^2$ be smooth properly embedded hypersurfaces that are disjoint and have $|A|^2$ uniformly bounded by $C< \infty$. Suppose that
 \begin{enumerate}
  \item $M_0$, $N^1$, and $N^2$ all satisfy one of assumptions (1)-(3) of Theorem \ref{first theorem}, i.e. they are all either asymptotically flat, periodic with compact fundamental domain, or are periodic and asymptotically flat in their domain,
  \item $M_t$ lies between hypersurfaces $N^1$ and $N^2$ for $t \in [0,T)$,
 \item $M_0$ is a graph of a function $f$ over $N^1$ with $||f||_{C^2} < \rho$, and
 \item the distance between $N^1$ and $N^2$ is uniformly bounded by $\eta > 0$,
 \end{enumerate}
 Then there is $\overline{\eta} > 0$ and $D \gg 0$ depending on $\rho$ and $C$ but not $T$ such that if $\eta < \overline{\eta}$, the flow of $M_t$ will be a graph of a function $f_t$ over $N^1$ with $||f_t||_{C^2} < D$ for $t \in [0,T)$. Thus, $M_t$ will exist with uniformly bounded curvature as long as it lies between $N^1$ and $N^2$. 
 \end{prop}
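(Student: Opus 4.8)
The plan is to combine the avoidance principle (in the form appropriate to the asymptotically flat / periodic setting discussed in Section 2) with the Brakke–White regularity theorem to get the uniform $C^2$ bound, and then bootstrap. First I would set up the graphical description: since $M_0$ lies between $N^1$ and $N^2$, and these are disjoint with the distance between them uniformly bounded by $\eta$, the flow $M_t$ is trapped between $N^1$ and $N^2$ for all $t \in [0,T)$ by the comparison principle (which applies because $M_0$, $N^1$, $N^2$ all satisfy one of (1)–(3), so either periodicity with compact fundamental domain or pseudolocality-controlled ends give an honest avoidance statement). The key geometric point is that being squeezed into a thin slab of width $\eta$ between two fixed hypersurfaces of curvature bounded by $C$ forces, after choosing $\eta = \overline{\eta}$ small, that $M_t$ remains a graph over $N^1$: a failure of the graphical property would require the normal exponential map from $N^1$ to fail to cover $M_t$, which cannot happen inside a tube of width much smaller than the reach of $N^1$ (controlled by $C$).

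Next I would get the quantitative $C^2$ bound. The slab containment gives a $C^0$ bound on the graphing function $f_t$ automatically ($\|f_t\|_{C^0} \le \eta$). For the $C^2$ (curvature) bound, I would invoke Theorem \ref{Brakke}: I need to check that the Gaussian density ratios $\Theta(M_t, X, r)$ are bounded above by $1 + \epsilon_0$ for $r$ in a suitable range. Here is where the thinness $\eta$ enters: near any spacetime point $X$, at scales $r$ comparable to (but not too small relative to) $\eta$, the flow looks like a small perturbation of the fixed hypersurface $N^1$, whose density ratios at such scales are close to $1$ by the curvature bound $C$ and the asymptotic flatness/periodicity (which rule out density concentration from faraway sheets). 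One must be slightly careful to choose the scale window $[r_1, r_2]$ with $r_1^2 < t_2 - t_1$ as Huisken's monotonicity requires; since $T > 1$ we have a full unit of time to work with, so for $t \ge r_2^2$ this is fine, and for the initial layer $t \in [0, r_2^2]$ we instead use the hypothesis $\|f\|_{C^2} < \rho$ together with short-time graphical estimates (standard parabolic regularity for the graphical MCF equation, which has a definite time of existence depending only on $\rho$). Brakke–White then yields $|A|^2 \le C'$ on $M_t$ with $C'$ depending only on $C$, $\rho$, and the scale window, hence not on $T$; translating back, $\|f_t\|_{C^2} < D$ for $D = D(\rho, C)$.

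Finally, the long-time statement: as long as $M_t$ lies between $N^1$ and $N^2$, the curvature bound $|A| \le \sqrt{C'}$ persists, and by the standard continuation criterion for mean curvature flow (singularities require $|A| \to \infty$) the flow cannot become singular, so it continues; and the graphical bound is preserved for the same reason it was established. So $M_t$ exists with $\|f_t\|_{C^2} < D$ on all of $[0,T)$.

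The main obstacle I expect is the density-ratio estimate in the noncompact setting: one must genuinely use assumptions (1)–(3) to rule out a contribution to $\Theta(M_t, X, r)$ coming from the ends of $N^1$ or from distant periodic copies (in the periodic-but-not-compact case), and to choose the scale window uniformly. The pseudolocality input from Chen–Yin referenced in Section 2 is what makes this go through for the asymptotically flat ends, and compactness of the fundamental domain handles the periodic case, but stitching these together — especially in case (3), where both phenomena occur — is the delicate part and must be done with care about the dependence of all constants on $C$ and $\rho$ alone.
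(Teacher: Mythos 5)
Your overall architecture (short-time control from $\|f\|_{C^2}<\rho$, then Brakke--White via density ratios close to $1$, then continuation) matches the paper's, but there is a genuine gap at the step where you claim that confinement in a thin slab alone forces graphicality and small density ratios. Being contained in a tube of width $\eta$ around $N^1$ (with $\eta$ below the reach of $N^1$) only gives a well-defined nearest-point projection; it does not prevent $M_t$ from folding or oscillating inside the tube, so it neither makes $M_t$ a graph over $N^1$ nor controls its area. In particular, a highly wiggly hypersurface squeezed into a thin tube can have arbitrarily large local area, so the Gaussian density ratios are not close to $1$ merely because $\|f_t\|_{C^0}\le\eta$; your density estimate tacitly uses a gradient bound that you never establish. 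The missing ingredient is the interpolation step the paper relies on: one must carry a $C^2$ bound forward on short time intervals (by continuity of the flow, respectively by a curvature doubling-time argument, with the doubling time uniform because assumption (1) forces an interior maximum of curvature), and then interpolate the small $C^0$ bound $\eta$ against that $C^2$ bound to get a \emph{small} $C^1$ bound, which is what simultaneously preserves graphicality and makes the area of $M_t$ over small balls in $N^1$ (hence the density ratios) close to that of the balls, so that Brakke--White applies and returns the $C^2$ bound $D$.

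Relatedly, your final paragraph asserts that ``the graphical bound is preserved for the same reason it was established,'' but the $T$-independence of $D$ is exactly the point that needs the iteration: having recovered $\|f_s\|_{C^2}<D$ at the end of the first step, one replaces $\rho$ by $D$, runs the flow for the (uniform) doubling time during which the bound is at most $2D$, uses the same fixed $\eta$ (chosen once, depending on $D$ and $C$) to get small $C^1$ bounds again, and reapplies Brakke--White to drop back below $D$, repeating as long as $M_t$ stays between $N^1$ and $N^2$. Without this explicit loop (and without the uniform doubling time supplied by assumption (1)), your argument does not rule out the bound deteriorating over the many unit time intervals in $[0,T)$, so the claimed independence of $T$ is not justified as written.
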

 \begin{proof}
 We first note by continuity of the flow there is some small $s$ (depending on the bound $C$) so if condition (2) above is satisfied it will remain so for $2\rho$ on $[0,s]$ for some function $f_t$ defined on $[0,s]$ with $f_0 = f$. 
$\medskip$

To deal with later times we will use the Brakke regularity theorem. More precisely, from the $C^2$ bound $2\rho$ on $f_t$ for $t \in [0,s]$ and the $C^0$ bounds that come from choosing $\eta$ small enough, we find that we can obtain $C^1$ bounds on $f_t$ which approach $0$ as $\eta \to 0$. Note that the $C^1$ bounds on $f_t$ depend only on $\eta$, the $C^2$ bound, and $C$. Then, choosing $\eta$ small enough, we find that the area of $M_t$ as a graph over some ball in $N^1$ is an arbitrarily small multiple of the area of that ball in $N^1$. We may then apply the Brakke regularity theorem over uniformly small balls to find that at time $s$, $||f_s||_{C^2} < D$ and, in particular using \cite{EH2}, one may continue the smooth flow. Replacing $\rho$ above with $D$ and replacing $s$ with the corresponding doubling time $s'$, we get $||f_t||_{C^2}<2D$ for times $t \in [s, s + s']$. Note that the doubling time depends only on $C$ and $D$ and not on $f_t$ itself. This follows from assumption (1) above since $M_t$ is initially periodic or asymptotically flat and will remain so for as long as it exists. This means that $M_t$ must attain an interior maximum of curvature and thus its doubling time depends only on $C$ and $D$. Then, choose $\eta$ small enough to find small enough $C^1$ bounds on $f_t$ for $t \in [s,s+s']$ to apply the Brakke regularity theorem over the same uniformly small balls as before. This gives that $||f_t||_{C^2} < D$ for $t \in [s, s+s']$. Then, we may iterate the argument using $||f_{s+s'}||_{C^2}<D$ while keeping $\eta$ the same as long as the flow exists between $N^1$ and $N^2$.
\end{proof}

\subsection{Preliminaries for the proof of Theorem 1.3}\label{eternal preliminaries}
$\medskip$

The curvature estimates in the proof of Theorem \ref{second theorem} need a different approach than those of Theorem \ref{first theorem}. The following result of Ecker-Huisken~\cite{EH2} (cf. Corollary 3.2 (ii)) will be used in the proof of Theorem \ref{second theorem}:

\begin{thm}[Ecker-Huisken~\cite{EH2}]\label{EH estimate} Let $\omega$ be a fixed vector in $\mathbb{R}^{n+1}$ and let $R>0$ and $0\leq \theta <1$. Let $B(y_0, R)$ be a ball in a hyperplane orthogonal to $\omega$. Suppose that a mean curvature flow $M_t$ may be written as a compact graph over $B(y_0, R)$ for time $t \in [0,T]$. Then, for $t\in [0,T]$,
\begin{equation}
    |A|^2_{B(y_0, \theta R)}(t) \leq C(n) (1-\theta^2)^{-2}\big(\frac{1}{R^2}+\frac{1}{t}\big) \sup_{B(y_0, R)\times [0,t]} v^{4}
\end{equation}
where $v = \langle \nu_{M_t}, \omega \rangle^{-1}$ and $\nu_{M_t}$ is a unit normal to $M_t$. 
\end{thm}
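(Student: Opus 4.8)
The plan is to follow Ecker--Huisken and prove this by a localized maximum principle argument. I would first pass to the nonparametric picture: write $M_t=\operatorname{graph} u(\cdot,t)$ over $B(y_0,R)$, so that $v=\langle\nu_{M_t},\omega\rangle^{-1}=\sqrt{1+|Du|^2}\ge 1$, and record the two evolution equations that drive the argument, valid along the flow with $\Delta=\Delta_{M_t}$:
\[
\Big(\frac{\partial}{\partial t}-\Delta\Big)v=-|A|^2v-2v^{-1}|\nabla v|^2,\qquad\Big(\frac{\partial}{\partial t}-\Delta\Big)|A|^2=-2|\nabla A|^2+2|A|^4 .
\]
The first follows from the support-function computation $(\partial_t-\Delta)\langle\nu,\omega\rangle=|A|^2\langle\nu,\omega\rangle$ together with $v=\langle\nu,\omega\rangle^{-1}$; the second is Simons' identity along the flow. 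It is also useful to record the Codazzi relation $\nabla_iv=-v^2h_{ij}\langle e_j,\omega\rangle$, hence $|\nabla v|^2\le v^4|A|^2$; the estimate $(\partial_t-\Delta)|\bar x-y_0|^2\ge-2n$, where $\bar x=x-\langle x,\omega\rangle\omega$ is the projection of the position vector to the hyperplane orthogonal to $\omega$; and the Kato inequality $|\nabla|A|^2|^2\le 4|A|^2|\nabla A|^2$.

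Next I would set up the test quantity. Fix a cutoff $\varphi=\chi\!\left(|\bar x-y_0|^2/R^2\right)$ with $\chi\equiv 1$ on $[0,\theta^2]$, $\chi$ supported in $[0,1)$ and vanishing to second order at $1$, so that $|\nabla\varphi|^2\le C(n)(1-\theta^2)^{-2}R^{-2}\varphi$ and $|(\partial_t-\Delta)\varphi|\le C(n)(1-\theta^2)^{-2}R^{-2}$ (the latter using $(\partial_t-\Delta)|\bar x-y_0|^2\ge-2n$), and set
\[
G=t\,\varphi^{2}\,v^{2}\,|A|^{2}.
\]
Since $M_t$ is a compact graph over $B(y_0,R)$ for $t\in[0,T]$ and $\varphi$ has compact support in the base, $G$ attains a maximum at some point $P=(x_0,t_0)$; we may assume $G(P)>0$, so $t_0>0$ and $x_0$ lies in the interior of the graphical region, and there $\nabla G=0$ and $(\partial_t-\Delta)G\ge 0$. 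The factor $t$ is what will produce the $1/t$ in the conclusion, and the factor $\varphi^2$ the localization together with the $(1-\theta^2)^{-2}R^{-2}$.

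The heart of the argument is the evaluation of $(\partial_t-\Delta)G$ at $P$. Writing $g=v^2|A|^2$, the weight $v^2$ is chosen precisely so that the supercritical reaction term $+2|A|^4$ of $|A|^2$ is cancelled by the term $-v\,(v^2)'\,|A|^2\cdot|A|^2=-2v^2|A|^4$ produced by $v$'s equation (here $v\,(v^2)'=2v^2$). The gradient terms are then organized using $\nabla G=0$ at $P$: this gives $\nabla|A|^2=-2|A|^2\varphi^{-1}\nabla\varphi-2v^{-1}|A|^2\nabla v$, which I would substitute both into the cross term $-2\langle\nabla v^2,\nabla|A|^2\rangle$ and, after applying Kato to rewrite $-2v^2|\nabla A|^2\le-\tfrac12 v^2|\nabla|A|^2|^2/|A|^2$, into that diffusion term as well. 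One checks that the three $|A|^2|\nabla v|^2$ contributions — from $v$'s equation, from the cross term, and from the Kato-estimated diffusion term — cancel, so that at $P$ only the cutoff error survives, namely a term bounded by $C(n)(1-\theta^2)^{-2}R^{-2}\,v^2\,|A|^2$ (possibly after absorbing a $\langle\nabla\varphi,\nabla v\rangle$-type cross term into the negative $\varphi^{-2}|\nabla\varphi|^2$ term). Feeding $(\partial_t-\Delta)G\ge 0$ into the expansion of $(\partial_t-\Delta)(t\varphi^2 g)$ and using $\nabla(\varphi^2 g)=0$ at $P$ then yields $G(P)\le C(n)(1-\theta^2)^{-2}\sup_{B(y_0,R)\times[0,t]}v^{4}$, where the power $v^{4}$ arises as $\sup v^2$ times the weight $v^2$. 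Evaluating this on $B(y_0,\theta R)\times\{t\}$, where $\varphi\equiv 1$ and $v^2\ge 1$, gives $t\,|A|^2\le C(n)(1-\theta^2)^{-2}\sup v^{4}$ there.

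Finally I would remove the restriction hidden in the factor $1/t$: running the same argument on the time interval $[\max(0,t-R^2),\,t]$ in place of $[0,t]$ replaces the role played by $1/t$ with $\min(1/t,1/R^2)$, and since $\min(1/t,1/R^2)\le 1/R^2+1/t$, combining the two cases produces the stated inequality with the factor $R^{-2}+t^{-1}$. The main obstacle is exactly the cancellation in the previous paragraph: because the reaction term $2|A|^4$ is critical, one must verify that the single negative reaction term available from $v$'s equation beats it \emph{and} that every gradient cross-term — mixing $\nabla A$, $\nabla v$ and $\nabla\varphi$ — is absorbed, either by this cancellation, by $-2v^2|\nabla A|^2$ through Kato, or by the cutoff terms, all while retaining the sharp $(1-\theta^2)^{-2}$ dependence. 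This delicate bookkeeping is precisely the content of the interior estimates of Ecker--Huisken \cite{EH2}, whose proof I would follow.
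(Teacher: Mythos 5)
The paper does not prove this statement; it is quoted (as Corollary 3.2(ii)) from Ecker--Huisken \cite{EH2}, so the only ``proof'' in the paper is the citation. Your sketch correctly identifies the framework of the Ecker--Huisken argument --- the evolution equations for $v$ and $|A|^2$, the test quantity $t\varphi^2(\text{weight})|A|^2$, Kato's inequality, and the cutoff in $|\bar x-y_0|^2$ --- but the central cancellation is not as you describe, and as written the argument does not close. With the plain weight $v^2$, writing $f=|A|^2$, $w=v^2$ and substituting $\nabla f=-\tfrac{f}{w}\nabla w-2f\varphi^{-1}\nabla\varphi$ (from $\nabla G=0$) into both the cross term and the Kato-estimated diffusion term, the coefficients of $\tfrac{f}{w}|\nabla w|^2$ do indeed sum to zero ($-\tfrac32+2-\tfrac12$), but the coefficients of $\tfrac{f}{\varphi}\langle\nabla\varphi,\nabla w\rangle$ sum to $+2$, not zero. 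Absorbing that surviving cross term into the only available negative term $-2\tfrac{fw}{\varphi^2}|\nabla\varphi|^2$ by Cauchy--Schwarz reintroduces $+\tfrac12\tfrac{f}{w}|\nabla w|^2=2|A|^2|\nabla v|^2\le 2v^4|A|^4$, a supercritical positive term; and because with the weight $v^2$ the reaction terms cancelled \emph{exactly}, there is no negative $|A|^4$-type term left to absorb it.

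This is precisely why Ecker--Huisken do not use the weight $v^2$ but rather $\tilde\varphi(v^2)=v^2(1-kv^2)^{-1}$ with $k=(2\sup v^2)^{-1}$: the convexity of $\tilde\varphi$ produces a spare strictly negative quadratic term $-2k\bigl(|A|^2\tilde\varphi\bigr)^2$ in the evolution inequality for $g=|A|^2\tilde\varphi(v^2)$, and it is this term that absorbs the $v^4|A|^4$ error above and all the cutoff errors via Young's inequality. It is also the true source of the fourth power $\sup v^4$ in the conclusion (through $k^{-1}\cdot\tilde\varphi\le 4\sup v^2\cdot v^2$), rather than the ``$\sup v^2$ times the weight $v^2$'' mechanism you propose. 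So the missing idea is the modified weight; the remainder of your outline (the role of the factor $t$, the cutoff producing $(1-\theta^2)^{-2}R^{-2}$, and rerunning the argument on $[\max(0,t-R^2),t]$ to obtain the factor $R^{-2}+t^{-1}$) is consistent with \cite{EH2}.
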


In the proof of Theorem \ref{second theorem}, we will be working directly with the catenoid, so we will list important facts about the geometry of the catenoid and rotationally symmetric flows in general. 
$\medskip$

Let $M^1$ be a catenoid in $\mathbb{R}^{n+1}$ of fixed radius centered around the origin. Arrange the catenoid so that it may be represented as the rotation of a positive graph $y=w(x)$ around the $x$-axis. Arrange and scale the catenoid so that it is symmetric about the $y$-axis and $w(0)=1$. That is, the minimum point of the graph $u$ is located at $w(0)=1$. The catenoid is the unique nonflat minimal surface, up to scaling, that arises as a surface of rotation in this way. The catenoids of different scales are given by $R\,w(\frac{x}{R})$ for $R>0$. For $n\geq 3$, the catenoid of radius $1$ has a finite half-width $W_n$ given by
\begin{equation}
    W_n = \frac{1}{2}\int_{s=1}^{\infty}\frac{1}{\sqrt{s^{2(n-1)}-1}} ds
\end{equation}
For $n=2$, the catenoid has an infinite half-width, so let $W_2=\infty$. Note that others have defined $W_n$ as twice what we have defined it here, but we will have need of this normalization later on.
$\medskip$

We define the outward unit normal to $M^1$ to be the unit normal that points away from the axis of rotation. Also, $M^1$ separates $\mathbb{R}^{n+1}$ into two connected components: the ``inside'' and the ``outside'' of the catenoid. The outside of the catenoid is defined to be the connected component that outward unit normal to $M^1$ points into. And the ``inside'' is the other component.
$\medskip$

The mean curvature flow of a surface given by the rotation of a graph is particularly simple; it is equivalent to the flow of the graph satisfying the following equation.

\begin{equation}\label{MCF rotationally symmetric}
\frac{\partial u}{\partial t} = \frac{u_{xx}}{1+u_x^2} - \frac{n-1}{u}
\end{equation}

Indeed, if a surface is initially given by the rotation of a graph, then its flow will be given by the rotation of a graph for as long as it exists. Abstractly, this follows from the Sturmian theory of such flows, developed in \cite{AAG}. For the graph $w$ of the profile curve of $M^1$, the right hand side of (\ref{MCF rotationally symmetric}) vanishes, which is consistent with the fact that $M^1$ is a minimal surface. An important observation is also that, as $u$ increases, the flow is better and better approximated by the curve shortening flow of the graph of u. Indeed, (\ref{MCF rotationally symmetric}) without the second term on the right-hand side is just the curve-shortening flow of a graph. 
$\medskip$

Finally, for an embedded rotationally-symmetric surface, consider the points $p$ such that the unit normal $\nu(p)$ satisfies $\langle \nu(p), v\rangle>0$, where $v$ is a unit normal perpendicular to the axis of rotation and pointing away from the axis of rotation. Then, the mean curvature $H$ at $p$ is the following:
\begin{equation}\label{mean curvature of graph rotation}
H = k-\frac{n-1}{r}\cos{\theta}
\end{equation}
where $k$ is the curvature of the profile curve with respect to $\nu(p)$ and $\theta$ is the angle the tangent vector to the curve makes with the positively-oriented $x$-axis. Note that in the case that the curve is a graph with the appropriately chosen normal, every point will satisfy the condition $\langle \nu(p), v \rangle >0$. 

\section{Proof of Theorem 1.1} 

Throughout this section unless otherwise stated, $M\subset \mathbb{R}^n$ will denote a minimal surface as assumed in Theorem \ref{first theorem}.
$\medskip$

To construct our ancient solutions we will proceed as typical: we first construct ``old-but-not ancient'' solutions $(M_j)_t$ to the flow existing on $[0, j]$, $j \to \infty$, recentering the time coordinate to get flows existing on $[-j, 0]$, and take a limit of flows to obtain an ancient solution. To take the limit we need to have good enough estimates, and to show we get something nontrivial, we need to know that the limit flow is nonempty and not another ``known'' ancient solution to the flow, like the original minimal surface $M$.
$\medskip$

First we will prove the following general lemma which will help to construct the old-but-not-ancient flows and immediately give Lemma \ref{l1}.
$\medskip$

\begin{lem}\label{continuity lemma} Let $N$ be a hypersurface in $\mathbb{R}^{n+1}$ such that $N_t$ exists with uniformly bounded geometry for $t\in [0,T_0]$. For every $T<T_0$ and $\epsilon \ll 1$, there exists $\delta = \delta(N, \epsilon, T) \ll 1$ such that if $N^*$ is a hypersurface with $||N^* - N||_{C^2} \leq \delta$, then
\begin{equation}
||N^*_t - N_t||_{C^2} \leq \epsilon
\end{equation}
for all $t\in [0,T]$.
\end{lem}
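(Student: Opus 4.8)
The plan is to prove Lemma~\ref{continuity lemma} as a straightforward continuous-dependence-on-initial-data statement for mean curvature flow, making essential use of the hypothesis that $N_t$ has \emph{uniformly bounded geometry} on the closed interval $[0,T_0]$, which is what rescues the noncompact setting. First I would fix a uniform tubular neighborhood of the spacetime track $\bigcup_{t\in[0,T_0]}N_t\times\{t\}$ and write nearby flows as graphs: for $\delta$ small, $N^*$ is a normal graph of a function over $N$ with small $C^2$ norm, and for a short time the flow $N^*_t$ remains a normal graph of a function $f_t$ over $N_t$. In these graphical coordinates, mean curvature flow becomes a (uniformly) parabolic quasilinear scalar equation for $f_t$ whose coefficients depend on the geometry of $N_t$ and are uniformly controlled by the uniform bounds on $|A_{N_t}|$ and its derivatives (the latter following from the former by interior estimates, e.g.\ Ecker--Huisken, since $[0,T_0]$ is closed and we can work slightly inside it). The difference $f_t$ satisfies a linear uniformly parabolic equation with uniformly bounded coefficients and zero forcing, so parabolic Schauder/$L^p$ estimates — applied on a uniform scale, using the uniform geometry to get uniform local coordinate charts — give $\|f_t\|_{C^2}\le C(N,T)\,\|f_0\|_{C^0}$ (or $\|f_0\|_{C^2}$) for $t\in[0,T]$, with $C$ independent of the point on $N$. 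Choosing $\delta$ small then yields $\|N^*_t-N_t\|_{C^2}\le\epsilon$.

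The key steps, in order, are: (i) set up the uniform tubular neighborhood and the graphical parametrization of $N^*_t$ over $N_t$, valid on a time interval whose length depends only on the uniform geometry bounds (a short-time graphicality/continuity argument like the opening lines of the proof of Proposition~\ref{smush}); (ii) derive the quasilinear parabolic PDE satisfied by $f_t$ and record that its ellipticity constants and coefficient bounds are uniform on $[0,T]$, shrinking $T_0$ to $T_0'\in(T,T_0)$ so that interior-in-time derivative estimates for $N_t$ are available; (iii) apply interior parabolic estimates on balls of a fixed uniform radius to bootstrap from a $C^0$ smallness of $f_t$ to $C^2$ smallness, getting the constant independent of the base point; (iv) a continuity/continuation argument in $t$ to propagate the graphical representation and the estimate across all of $[0,T]$ — as long as $\|f_t\|_{C^2}$ stays below the tubular-neighborhood threshold the graph persists, and the estimate keeps it small once $\delta$ is chosen small, so the set of good times is open, closed, and nonempty.

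The main obstacle I anticipate is the noncompactness: all the parabolic estimates must be applied on a \emph{uniform} spatial scale with \emph{uniform} constants, so one must genuinely use the uniformly bounded geometry hypothesis (bounded $|A|$ and, via smoothing, bounded $|\nabla^k A|$) to produce local coordinate charts on $N_t$ of a fixed size in which the metric and the PDE coefficients are uniformly controlled. A related subtlety is that the uniform derivative bounds on $A_{N_t}$ for $t$ near $0$ require either assuming bounded geometry of the initial surface $N$ in $C^\infty$ or, more cleanly, restricting to $t\in[\sigma,T]$ and handling $[0,\sigma]$ by the elementary short-time continuity argument; either way this is routine given the stated hypotheses. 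The rest — writing the graphical flow equation, invoking Schauder estimates, and running the open–closed continuation argument — is standard, so I would not belabor those computations.
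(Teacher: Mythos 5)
Your proposal is correct in outline, but it takes a genuinely different route from the paper. The paper argues by contradiction and compactness: if the statement failed, there would be a sequence $N^*_n\to N$ in $C^2$ and times $t_n\in(0,T]$ at which $\|(N^*_n)_{t_n}-N_{t_n}\|_{C^2}>\epsilon$; since the $N^*_n$ have uniformly bounded geometry independent of $n$, their flows exist on a uniform time interval with uniform curvature bounds, so after passing to $t_n\to t^*$ in the compact interval $[0,T]$ one extracts a subsequential limit flow, which by uniqueness of mean curvature flow with bounded geometry must be $N_t$ itself --- contradicting the assumed $\epsilon$-separation at $t^*$. You instead give a direct, quantitative argument: write $N^*_t$ as a normal graph $f_t$ over the moving background $N_t$, observe that $f_t$ solves a uniformly parabolic quasilinear equation with $f\equiv 0$ as a solution, use the uniformly bounded geometry to get coordinate charts and coefficient bounds at a fixed scale, apply interior Schauder-type estimates to convert smallness of $f$ into $C^2$ smallness, and close with an open--closed continuation argument in $t$. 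Your approach buys an effective $\delta(\epsilon,T)$ and yields long-time existence of $N^*_t$ on $[0,T]$ as a by-product, at the cost of the graphical-PDE machinery and the care you rightly flag about uniform charts, coefficient H\"older control (which needs the a priori $C^2$ bound from the continuation step), and the initial time layer $[0,\sigma]$; the paper's argument is softer and shorter, but non-quantitative and reliant on a compactness theorem for flows of bounded geometry together with uniqueness (Chen--Yin). Both are legitimate proofs of the lemma as stated.
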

\begin{proof}
Suppose not. Then, for some $T<T_0$ and $\epsilon\ll 1$, there is a sequence of $N^*_n$ such that $||N^*_n -N ||_{C^2} < \frac{1}{n}$ yet there exists $t_n \in (0,T]$ such that $||(N^*_n)_{t_n} - N_{t_n}||_{C^2} >\epsilon$. Note that since $N^*_n$ has uniformly bounded geometry independent of $n$, we have that the flow of $N^*_n$ will a priori exist on some short time interval independent of $n$. Then, the sequence $N^*_n$ converges to $N$ in $C^2$ so by continuity of the flow under perturbations of the initial conditions in $C^2$, we have that $(N^*_n)_t = N_t$ for $t\in [0,T]$. 
$\medskip$

Since $N$ has uniformly bounded geometry, we have that each $N^*_n$, for large $n$, has uniformly bounded geometry as well. However, since $[0,T]$ is a compact time interval, we may find a subsequence of $\{t_n\}$ converging to $t^*\in [0,T]$ such that $(N^*_n)_{t_n}$ subconverges to a limit surface $(N^*_{\infty})_{t^*}$ such that $||(N^*_{\infty})_{t^*} - N_{t^*}||_{C^2}>\epsilon$. However, by the above argument, $(N^*_n)_{t_n}$ must be converging to $N_{t^*}$. This is a contradiction.   
\end{proof}

To define one family of the old-but-not ancient solutions, $(M_j)_t$, we will consider perturbations $M_j = (M_j)_0$ of $M$ by small constant variations normal to $M$, for a given choice of unit normal $\nu$ (here we use the two-sided hypothesis). We can similarly define another family by switching the orientation of the normal throughout to obtain the second claimed family. 
$\medskip$

More precisely, let $M_{\delta}$ be given by $M_{\delta} = M + \delta \nu$. By the uniform tubular neighborhood assumption and bounded curvature, for all $\delta$ small, $M_{\delta}$ will be smooth and embedded. Note that the uniform tubular neighborhood assumption gives that there exists one $\delta$ such that $M_{\delta}$ is smooth and embedded, but with our assumptions, this implies that $M_{\delta}$ is smooth and embedded for all small $\delta$. For $j \to \infty$, $M_j$ will be defined to be $M_{\delta_j}$ for an appropriate choice of $\delta_j$. Apply Lemma \ref{continuity lemma} to the minimal surface $M$ with some fixed choice of $\epsilon$ and the time $T_j$. This gives some $\delta_j$ such that $M_j = M_{\delta_j}$ will exist for time $[0,T_j]$ and $M_j$ will be $\epsilon$-close to $M$ in $C^2$ for this same time. 
We will further refine this choice of $\delta_j$ after Lemma \ref{l3}. 
$\medskip$

Note that $M_{\delta}$ is (weakly) mean convex with respect to $\nu$ for all $\delta$ small enough such that $M_{\delta}$ is smooth and embedded. Indeed, suppose that it is not. Then, if there is a point $p \in M_{\delta}$ such that $H(p)<0$, for some small time $0<t\ll 1$, $||(M_{\delta})_t - M||_{C^0}<\delta$. This follows because $\nu$ points away from $M$, so if $M_{\delta}$ has negative mean curvature at $p$ with respect to $\nu$, the flow will force it to become closer to $M$ for some short time. However, by the comparison principle between noncompact hypersurfaces (see Section 2), $||(M_{\delta})_t - M||_{C^0}\geq\delta$ for all time $t$. This is a contradiction, so $M_{\delta}$ is mean convex. By Corollary 4.4 of Ecker-Huisken~\cite{EH2}, $(M_{\delta})_t$ will remain mean convex for as long as it exists. 
$\medskip$

In order to take a limit of the approximate solutions $(M_j)_t$, we need curvature bounds. These will come from Lemma \ref{l2}. We must also know that for large enough $j$, corresponding to small enough $\delta$, $(M_j)_t$ will exist for long enough. This is given by Lemma \ref{l1}. In order to extract a limit that is distinct from $M$, we need to know that there exists some $\epsilon_1 \ll 1$ such that for all $\delta$ small enough, $(M_{\delta})_t$ will flow to be distance $\epsilon_1$ away from $M$, at some point, after some amount of time. This will come from Lemma \ref{l3}.
$\medskip$

Now, we find the following immediate consequence of Proposition \ref{smush}:
\begin{lem}\label{l2} There is $\epsilon_1 > 0$ and $C,D \gg 0$ so that as long as the flow $(M_\delta)_t$ is a subset of the interior of the region $U$ between the minimal surface $M$ and the smooth embedded surface $M_{\epsilon_1}$ for $t \in [0, T]$, $T > 1$, then $|A|^2 < C$ on $(M_{\delta})_t$ and $(M_{\delta})_t$ is the graph of a function $f_t$ over $M_{\delta}$ with $||f_t||_{C^2} < D$ for $t \in [0, T]$. 
\end{lem}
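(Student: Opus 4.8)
The plan is to deduce Lemma \ref{l2} directly from Proposition \ref{smush} by choosing the comparison hypersurfaces $N^1$ and $N^2$ appropriately. First I would take $N^1 = M_\delta$ and $N^2 = M_{\epsilon_1}$, where $\epsilon_1 \ll 1$ is to be chosen. By the uniform tubular neighborhood assumption together with the uniformly bounded curvature of $M$, both $M_\delta$ and $M_{\epsilon_1}$ are smooth, properly embedded, and disjoint for all $\delta < \epsilon_1$ small, and both have $|A|^2$ uniformly bounded by a constant depending only on $M$ (not on $\delta$ or $\epsilon_1$), since they are fixed normal graphs over $M$ of uniformly small height with uniformly controlled derivatives. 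This verifies that $N^1$ and $N^2$ satisfy the standing curvature bound hypothesis of Proposition \ref{smush} with a uniform constant $C$.

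Next I would check the four enumerated hypotheses of Proposition \ref{smush} in turn. Hypothesis (1): since $M$ satisfies one of options (1)--(3) of Theorem \ref{first theorem}, and $M_\delta$, $M_{\epsilon_1}$ are normal graphs of bounded height over $M$, they inherit the same asymptotic flatness or periodicity property; likewise $M_0 = M_\delta$ does. Hypothesis (2): this is precisely the running assumption of Lemma \ref{l2}, that $(M_\delta)_t$ stays inside the region $U$ bounded by $M$ and $M_{\epsilon_1}$ — note $M$ lies on one side of $M_\delta$, so $(M_\delta)_t$ lies between $M_\delta = N^1$ and $M_{\epsilon_1} = N^2$ once we note it cannot cross $M_\delta$ back toward $M$ by the comparison principle (or one simply enlarges the slab slightly). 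Hypothesis (3): $M_0 = M_\delta$ is the graph of the zero function over $N^1 = M_\delta$, so trivially $\|f\|_{C^2} = 0 < \rho$ for the $\rho$ produced by Proposition \ref{smush}. Hypothesis (4): the distance between $M_\delta$ and $M_{\epsilon_1}$ is at most $\epsilon_1$ (comparing normal heights), so it is bounded by $\eta = \epsilon_1$. Proposition \ref{smush} then furnishes a threshold $\overline\eta > 0$ and a constant $D \gg 0$, depending only on $\rho$ and $C$ — hence only on $M$ — such that choosing $\epsilon_1 < \overline\eta$ forces $(M_\delta)_t$ to be a graph of $f_t$ over $M_\delta$ with $\|f_t\|_{C^2} < D$ for all $t \in [0,T]$, and in particular $|A|^2 < C$ on $(M_\delta)_t$.

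The one genuine subtlety — the main obstacle, such as it is — is bookkeeping about which surface plays the role of the graph base. Proposition \ref{smush} has $M_0$ a graph over $N^1$, and gives $f_t$ a graph over $N^1$; here Lemma \ref{l2} asks for a graph over $M_\delta$ itself, so I must take $N^1 = M_\delta$ rather than $N^1 = M$. This is consistent because $M_\delta$ (not $M$) is the surface that the flow starts from and stays near, and $M$ serves only to pin the lower wall of the slab; one should double-check that $M_\delta$ lies strictly between $M$ and $M_{\epsilon_1}$ and that $(M_\delta)_t$ therefore lies in the thinner slab between $M_\delta$ and $M_{\epsilon_1}$, which follows from mean convexity of $M_\delta$ with respect to $\nu$ (established just above) forcing the flow to move away from $M$, combined with the comparison principle keeping it below $M_{\epsilon_1}$. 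With $\epsilon_1$ fixed once and for all below $\overline\eta$, the constants $C$ and $D$ are then independent of $\delta$ and of $T$, which is exactly what is needed for the subsequent limiting argument. Everything else is a direct citation of Proposition \ref{smush}.

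\begin{proof}
Set $N^1 = M_\delta$ and $N^2 = M_{\epsilon_1}$ for a value $\epsilon_1 \ll 1$ to be fixed below, where we require $\delta < \epsilon_1$. By the uniform tubular neighborhood assumption and the uniformly bounded curvature of $M$, for all $\epsilon_1$ sufficiently small both $M_\delta$ and $M_{\epsilon_1}$ are smooth, properly embedded, and disjoint, and each is a normal graph over $M$ of uniformly small height with uniformly controlled derivatives; hence there is a constant $C < \infty$, depending only on $M$, with $|A|^2 \leq C$ on both $N^1$ and $N^2$. We now verify the hypotheses of Proposition \ref{smush}. Since $M$ satisfies one of options (1)--(3) of Theorem \ref{first theorem}, so do the normal graphs $M_\delta = N^1$, $M_{\epsilon_1} = N^2$, and $M_0 = M_\delta$; this is hypothesis (1). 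As $M_\delta$ is mean convex with respect to $\nu$ (shown above), the comparison principle keeps $(M_\delta)_t$ on the far side of $M_\delta$ from $M$, while the hypothesis of Lemma \ref{l2} keeps it inside $U$; thus $(M_\delta)_t$ lies between $N^1 = M_\delta$ and $N^2 = M_{\epsilon_1}$ for $t \in [0,T]$, which is hypothesis (2). Since $M_0 = M_\delta = N^1$, $M_0$ is the graph of the zero function over $N^1$, so $\|f\|_{C^2} = 0 < \rho$, giving hypothesis (3). Finally, comparing normal heights over $M$, the distance between $N^1$ and $N^2$ is at most $\epsilon_1$, so hypothesis (4) holds with $\eta = \epsilon_1$. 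Proposition \ref{smush} now produces $\overline\eta > 0$ and $D \gg 0$ depending only on $\rho$ and $C$, hence only on $M$ and not on $T$. Fix $\epsilon_1 < \overline\eta$; then for every small $\delta < \epsilon_1$, as long as $(M_\delta)_t$ remains a subset of the interior of $U$ for $t \in [0,T]$ with $T > 1$, the flow $(M_\delta)_t$ is the graph of a function $f_t$ over $M_\delta$ with $\|f_t\|_{C^2} < D$, and in particular $|A|^2 < C$ on $(M_\delta)_t$.
\end{proof}
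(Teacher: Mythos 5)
Your proposal is correct and is essentially the paper's argument: the paper states Lemma \ref{l2} as an immediate consequence of Proposition \ref{smush}, and your write-up just makes the intended application explicit (choosing $N^1$, $N^2$ among the parallel surfaces, checking hypotheses (1)--(4), and fixing $\epsilon_1$ below the threshold $\overline\eta$ so the constants are independent of $\delta$ and $T$). The only cosmetic quibble is that the curvature bound on $(M_\delta)_t$ follows from the $C^2$ bound $D$ on $f_t$ over a base of bounded geometry, so that constant should be relabeled rather than reusing the $C$ bounding $|A|^2$ on $N^1, N^2$.
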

The above lemma shows that the flows $(M_{\delta})_t$ will exist with uniformly bounded curvature as long as the flow is between $M$ and $M_{\epsilon_1}$. Now that we have set $\epsilon_1$, apply Lemma \ref{continuity lemma} to the minimal surface $M$, setting $N$ to be $M$, to find the following:
\begin{lem}\label{l1} Let $\epsilon_1$ be the constant obtained from Lemma \ref{l2}. Let $T_\delta$ be the first time $(M_{\delta})_{T_{\delta}} \cap M_{\epsilon_1} \neq \emptyset$. Then as $\delta \to 0$, $T_{\delta} \to \infty$.
\end{lem}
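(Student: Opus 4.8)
The idea is that a minimal surface is a stationary solution of mean curvature flow, so the flow starting from a small normal perturbation $M_\delta = M + \delta\nu$ must stay close to $M$ for a long time, with the duration of closeness tending to $\infty$ as $\delta \to 0$; since $M_{\epsilon_1}$ sits at a fixed positive distance $\epsilon_1$ from $M$, it must then take at least that long for $(M_\delta)_t$ to reach $M_{\epsilon_1}$. Concretely, I would argue directly: fix an arbitrary $T < \infty$ and show that $T_\delta > T$ for all $\delta$ sufficiently small. Since $M$ is minimal, $M_t = M$ for all $t \geq 0$, so $M$ is a flow with uniformly bounded geometry on $[0,T_0]$ for every $T_0$; apply Lemma \ref{continuity lemma} with $N = M$, the time $T$, and $\epsilon := \epsilon_1/2$ (shrinking $\epsilon_1$ beforehand if necessary so that the tubular neighborhood of width $\epsilon_1$ about $M$ is embedded, as is already required for Lemma \ref{l2}). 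This produces $\delta_0 = \delta(M, \epsilon_1/2, T) \ll 1$. Since $M_\delta$ is the normal graph of the constant function $\delta$ over $M$, we have $||M_\delta - M||_{C^2} = \delta$, so for every $\delta < \delta_0$ the hypothesis of Lemma \ref{continuity lemma} holds, giving $||(M_\delta)_t - M||_{C^2} \leq \epsilon_1/2$ for all $t \in [0,T]$. In particular $(M_\delta)_t$ stays within $C^0$-distance $\epsilon_1/2 < \epsilon_1$ of $M$ on $[0,T]$, hence cannot meet $M_{\epsilon_1}$, whose points lie at distance exactly $\epsilon_1$ from $M$. Thus $T_\delta > T$, and since $T$ was arbitrary, $T_\delta \to \infty$ as $\delta \to 0$.

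The one point requiring care is that Lemma \ref{continuity lemma} presumes $(M_\delta)_t$ is defined on all of $[0,T]$, and this is exactly where Lemma \ref{l2} enters: as long as $(M_\delta)_t$ remains in the region $U$ between $M$ and $M_{\epsilon_1}$, Lemma \ref{l2} furnishes a uniform bound $|A|^2 < C$, which by standard short-time existence for initial data of bounded curvature (Ecker--Huisken) rules out a singularity and lets the flow continue. A short open--closed argument then closes the loop. Consider the set of $t \in [0,T]$ for which $(M_\delta)_t$ exists and remains within $C^0$-distance $\epsilon_1/2$ of $M$: it contains a neighborhood of $0$ (the initial perturbation has size $\delta$ and the flow is continuous), it is relatively closed, and it is open, since on any such subinterval $[0,t_*]$ with $t_* < T$ the flow is trapped in $U$ (by the avoidance principle of Section 2, applied to the stationary surface $M$ and to $M_{\epsilon_1}$), so Lemma \ref{l2} extends the flow past $t_*$ and Lemma \ref{continuity lemma} re-establishes the $C^0$-closeness slightly beyond $t_*$. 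Hence this set is all of $[0,T]$, as needed.

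I expect the only genuine subtlety, and it is a minor one, to be this bootstrapping between existence (from the curvature bound of Lemma \ref{l2}, valid while the flow is trapped) and entrapment (from the $C^2$-closeness of Lemma \ref{continuity lemma}, valid while the flow exists); everything else is a direct consequence of the stationarity of the minimal surface $M$ and continuous dependence of the flow on its initial data. In particular, no new estimates are needed: the proof is essentially a packaging of Lemmas \ref{continuity lemma} and \ref{l2} together with the comparison principle.
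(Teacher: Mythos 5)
Your proposal is correct and follows essentially the same route as the paper, which obtains Lemma \ref{l1} precisely by applying Lemma \ref{continuity lemma} with $N=M$ (the stationary minimal surface) and $\epsilon<\epsilon_1$, so that for small $\delta$ the flow $(M_\delta)_t$ stays too close to $M$ on $[0,T]$ to meet $M_{\epsilon_1}$. Your added bootstrapping between the curvature bound of Lemma \ref{l2} and the $C^2$-closeness (and the harmless caveat that $\|M_\delta-M\|_{C^2}$ is only $O(\delta)$ rather than exactly $\delta$) simply makes explicit what the paper leaves implicit.
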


This lemma tells us that $(M_{\delta})_t$, for $\delta$ small enough, will exist for as long as it is a subset of the region $U$ between $M$ and $M_{\epsilon_1}$, with curvature bounds in this region independent of $j$ (from Lemma \ref{l2}). This allows us to extract an ancient limit flow from $(M_j)_t$. The only thing remaining is to ensure that the limit flow will be different from $M$. To do this, it suffices to show that for every sufficiently small $\delta$, $T_\delta \neq \infty$. That is, for all small enough $\delta$, $(M_{\delta})_t$ will eventually flow to intersect $M_{\epsilon_1}$, for $\epsilon_1$ chosen sufficiently small. This is the heart of the proof of Theorem \ref{first theorem} and where the assumption of instability is used. 

\begin{lem}\label{l3} Let $\epsilon_1$ be as chosen above. Let $T_{\delta}$ be the first time that $(M_{\delta})_{T_{\delta}} \cap M_{\epsilon_1} \neq \emptyset$. Then after possibly taking $\epsilon_1$ smaller, for all $\delta \ll 1$, $T_{\delta} \neq \infty$. 
\end{lem}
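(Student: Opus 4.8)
The plan is to exploit instability of $M$ via the first Jacobi eigenfunction on a bounded domain, and to transport this infinitesimal information into a genuine motion of the flow away from $M$. Suppose, for contradiction, that for arbitrarily small $\delta$ we have $T_\delta = \infty$, i.e. $(M_\delta)_t$ stays in the region $U$ between $M$ and $M_{\epsilon_1}$ for all time. By Lemma \ref{l2} such a flow has uniformly bounded curvature and is a graph $f_t$ over $M$ with $\|f_t\|_{C^2}<D$ for all $t$; moreover $f_t>0$ (it stays strictly on the $\nu$-side by the comparison principle, as in the mean convexity argument above) and $f_t$ is bounded above by a constant comparable to $\epsilon_1$. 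First I would write down the linearization: the graph function of a mean curvature flow over the fixed minimal surface $M$ satisfies $\partial_t f = Lf + Q(f,\nabla f,\nabla^2 f)$, where $L=\Delta_M + |A_M|^2$ is the Jacobi operator and $Q$ is a quadratic error term controlled by $\|f\|_{C^2}$; in fact, since we may take $\epsilon_1$ as small as we like, the $C^1$ smallness from Lemma \ref{l2} forces $|Q|\le C\epsilon_1(|f|+|\nabla f|+|\nabla^2 f|)$ on $U$, so $Q$ is a genuinely subordinate perturbation of $L$.

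Next I would bring in instability. By hypothesis there is a bounded domain $D_0\subset M$ with $\lambda_1(D_0)<0$ for $L$; let $\varphi>0$ be the corresponding Dirichlet eigenfunction on $D_0$, extended by zero. Consider $E(t) = \int_{D_0} f_t\,\varphi\,d\mu$. Differentiating and integrating by parts (using $\varphi=0$ on $\partial D_0$, and being careful with the boundary term, which has a favorable sign since $f_t>0$ and $\partial_\nu\varphi<0$ on $\partial D_0$), one gets
\begin{equation}
\frac{d}{dt}E(t) \;\ge\; -\lambda_1(D_0)\,E(t) \;-\; C\epsilon_1\,\|f_t\|_{C^2}\int_{D_0}\varphi\,d\mu \;-\; (\text{boundary}),
\end{equation}
and since $-\lambda_1(D_0)>0$ while $\|f_t\|_{C^2}<D$ is bounded, the first term dominates once $E(t)$ has grown past a fixed threshold. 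The point is that $E(0)=\int_{D_0}\delta\varphi\,d\mu = \delta\int_{D_0}\varphi\,d\mu>0$, so $E$ is positive and, by this differential inequality, grows at least like $\delta\, e^{ct}$ with $c=-\lambda_1(D_0)/2>0$ as long as the flow remains in $U$. But $E(t)\le \sup f_t\cdot\int_{D_0}\varphi\,d\mu$ is bounded above by a fixed multiple of $\epsilon_1$ for all time if $T_\delta=\infty$. Exponential growth from a positive initial value contradicts a uniform upper bound, so $T_\delta\ne\infty$; after shrinking $\epsilon_1$ if necessary to absorb the error terms (the $C\epsilon_1$ coefficient) into $-\lambda_1(D_0)$, this holds for all sufficiently small $\delta$. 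This is exactly the instability-driven escape we need.

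I expect the main obstacle to be making the differential inequality for $E(t)$ fully rigorous while the domain $D_0$ is fixed but $M$ is noncompact: one must check that the quadratic error $Q$ really is controlled uniformly in $t$ by $\epsilon_1$ (this uses the $C^1$-smallness in Lemma \ref{l2}, which in turn relies on choosing $\epsilon_1$ small — so the order of choosing constants matters and should be arranged as "first fix $D_0$ and $\lambda_1$, then shrink $\epsilon_1$, then take $\delta$ small"), and that the Dirichlet boundary term arising from integration by parts does not have the wrong sign — it does not, because $f_t\ge 0$ and the outward normal derivative of the positive first eigenfunction is nonpositive on $\partial D_0$, so that term only helps. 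A secondary subtlety is that a priori $(M_\delta)_t$ is only known to be a graph over $M$ with $C^2$ bounds from Lemma \ref{l2}; higher regularity (needed to even write $\partial_t f = Lf + Q$ classically) follows from interior parabolic estimates given the uniform curvature bound, so this is routine but should be cited. Everything else — the existence and positivity of $\varphi$, monotonicity of $E$ past a threshold, the contradiction with the upper bound $E(t)\lesssim\epsilon_1$ — is then straightforward.
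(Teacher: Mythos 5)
Your strategy (project the graph function onto the first Dirichlet eigenfunction of the Jacobi operator on an unstable domain and run an ODE argument) is genuinely different from the paper's proof, but as written it has a gap at its central step. The differential inequality you derive is of the form $\frac{d}{dt}E(t) \geq -\lambda_1(D_0)E(t) - C\epsilon_1\|f_t\|_{C^2}\int_{D_0}\varphi\,d\mu$, and the inhomogeneous error term here is of a \emph{fixed} size: it is bounded by $C\epsilon_1 D\int_{D_0}\varphi\,d\mu$, independent of $\delta$ and, crucially, not proportional to $E(t)$. Since $E(0)=\delta\int_{D_0}\varphi\,d\mu$ can be taken arbitrarily small compared to this fixed error (the contradiction hypothesis is about arbitrarily small $\delta$ with $\epsilon_1$ fixed), the inequality is perfectly consistent with $E(t)$ staying below the threshold $\frac{C\epsilon_1 D\int\varphi}{-\lambda_1(D_0)}$ forever; no exponential growth "like $\delta e^{ct}$" follows. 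Your proposed fix --- shrinking $\epsilon_1$ to "absorb the $C\epsilon_1$ coefficient into $-\lambda_1(D_0)$" --- only works if the error were of the form $C\epsilon_1 E(t)$, but it involves $|\nabla f_t|$ and $|\nabla^2 f_t|$ on $D_0$, which are bounded by $D$ (or, with interpolation and higher-order estimates, by a small constant), not by $\int_{D_0} f_t\varphi$. To close this you would need something like a Harnack/gradient estimate for the positive solution $f_t$ giving $\sup_{D_0}(|f_t|+|\nabla f_t|+|\nabla^2 f_t|)\leq C\inf_{D_0} f_t$, so that the error really is controlled by $E(t)$; that is a nontrivial additional ingredient which you neither prove nor cite, and without it the argument does not produce the escape.

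For comparison, the paper avoids the linearization entirely: assuming $T_{\delta^*}=\infty$ for some small $\delta^*$, it uses mean convexity (monotone motion away from $M$) plus the uniform curvature bounds of Lemma \ref{l2} to extract a smooth limit as $t\to\infty$, shows this limit is a complete minimal hypersurface trapped between $M$ and $M_{\epsilon_1}$, and then iterates with $\epsilon_{1,k}=\epsilon_1/2^k$ to produce (if the lemma failed for every $k$) a sequence of minimal graphs converging to $M$ from one side; this yields a positive Jacobi field on $M$ as in Simon, and Fischer-Colbrie--Schoen then forces $\lambda_1(D)>0$ on every bounded domain, contradicting instability. That route uses instability only through the Fischer-Colbrie--Schoen characterization and sidesteps the quantitative control of the nonlinear error that your projection argument requires. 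Your approach could in principle be made to work (it is close in spirit to the Choi--Mantoulidis framework), but it needs the missing estimate tying the nonlinearity to the unstable-mode projection, and the order of quantifiers ($\epsilon_1$ fixed first, $\delta\to 0$ afterwards) is exactly where the present write-up breaks down.
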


\begin{proof}
Suppose this is not the case for $\epsilon_1$. Then, there is some $0<\delta^*\ll 1$ so that $||(M_{\delta^*})_t - M||_{C^0} < \epsilon_1$ for all time $t$. 
By Lemma \ref{l2}, $(M_{\delta^*})_t$ will have uniform curvature bounds for all time. This means that $(M_{\delta^*})_t$ will exist for time $t \in [0,\infty)$ with uniformly bounded curvature and will remain between $M$ and $M_{\epsilon_1}$.  
$\medskip$

As proven above, $(M_{\delta^*})_t$ is mean convex, so $(M_{\delta^*})_t$ is moving monotonically away from $M$. If $M_{\delta^*}$ is minimal, then relabel it to $N$ and proceed to the next paragraph. Suppose that $M_{\delta^*}$ is not minimal. By the uniform curvature bounds on $(M_{\delta^*})_t$, we may pass to a limit along any subsequence of $t_i \to \infty$ to find that $(M_{\delta^*})_t$ smoothly converges to some smooth limit surface which we denote by $N$. In fact, $N$ is a minimal hypersurface. 
$\medskip$

Indeed, suppose $N$ is not minimal. Since $(M_{\delta^*})_t$ is mean convex, we know that $N$ must be mean convex as well. Then, there is $p \in N$ such that $H(p)> c>0$. We may find a smooth curve $p(t) \in (M_{\delta^*})_t$ such that $p(t) \to p$ and $p(t)$ is the spacetime track of a point converging to $p$. For $t$ large enough, $H(p(t))>\frac{c}{2}$. Since the flow moves monotonically by mean convexity, we have by integration that $d(p,p(t)) = \infty$. This contradicts the fact that $p(t)$ converges to $p$. Thus, $N$ is a smooth complete minimal hypersurface disjoint from $M$ yet is between $M$ and $M_{\epsilon_1}$.
$\medskip$

We then reset $\epsilon_1$ to be $\epsilon_1/2$. If the statement is true for $\epsilon_1/2$ then we are done; otherwise, we iterate the argument. Labeling $\epsilon_{1,k} = \epsilon_1/2^k$, we must have the conclusion either be true for some choice of $k$, or we obtain a sequence of distinct minimal surfaces approaching $M$ from one side.
$\medskip$

In the latter case, denote by $N_k$ each of the $N$ found above using $\epsilon_{1,k}$. By the curvature estimates coming from Lemma \ref{l2} they are graphical over $M$ for large enough $k$ since $\epsilon_{1,k} \to 0$.  Of course, since $\epsilon_{1,k} \to 0$, we have that $N_k$ converges from one side to $M$. This then gives rise to a positive solution to the Jacobi operator on $M$ as in \cite{Si}. By Theorem 1.1 in \cite{FCS} we must then have $\lambda_1(D) > 0$ on any bounded domain of $M$, contradicting the instability of $M$. 
\end{proof}

Hence for $\epsilon_1$ small enough, we know for every integer $j > 0$ there will be $\delta$ so that $T_{\delta} = j$. Let $\delta_j$ be such that $T_{\delta_j} = j$ and define $M_j$ to be $M_{\delta_j}$. We have that the flow $(M_{\delta_j})_t$ will exist for $[0,j]$ with curvature---and hence by Shi's estimates, the derivatives of curvature---bounded uniformly, independent of $j$. Recentering the time parameter by $-j$ for each of the $(M_j)_t$, we have that $(M_j)_t$ is defined for $[-j,0]$. By definition of $T_{\delta_j}$, $(M_j)_0$ will intersect $M_{\epsilon_1}$. Let $p_j \in (M_j)_0 \cap M_{\epsilon_1} \neq \emptyset$. Recenter each $(M_j)_0$ so that $p_j$ is taken to the origin. Then, take a subsequential limit in the smooth topology to find an ancient solution $M_t$.
$\medskip$

There is a catch though: it is conceivable that $M_t$ does not flow out of $M$ if the $p_j$ diverge to spatial infinity. For example, although this will soon be ruled out, if $M$ were a catenoid and the $p_j$ diverged, the limit of recenterings of $M$ would be a plane. 
$\medskip$

If $M$ is periodic with compact fundamental domain, the recenterings do not matter so we suppose that $M$ is asymptotically flat (the precise rate does not matter). In this case we will show that all the $p_j$ are contained in a compact set. Indeed, suppose not. Then the limit of the surface $M$ under recenterings will be flat and we will obtain in the limit an ancient solution flowing out of a plane $P$ distance $\epsilon_1$ from the origin but which, at $t = 0$, intersects the origin. 
$\medskip$

Going far enough back in time, there will be a time $T$ for which $M_T$ is at most distance $\epsilon_1/2$ from $P$. Considering an appropriate translate of $P$ by distance $3\epsilon_1/4$ though, we see by the comparison principle then that the flow will never be distance more than $3 \epsilon_1/4$ from $P$, giving a contradiction. 
$\medskip$

We get that the $p_j$ all lie in a bounded domain, so after recentering them all to the origin, the corresponding recenterings of $M$ result in $M$ moved by a finite translation. The ancient solution $M_t$ is not $M$, since it must be bounded away from $M$ at the origin at time $0$ by distance $\epsilon_1$. It is also certainly not a minimal surface because it flows out of $M$ but is distance $\epsilon_1$ from $M$ at $t = 0$, so is not stationary. 
$\medskip$

We note that if we took the unit normals with the opposite orientation from $\nu$, we would obtain a distinct ancient solution. These are distinct because they are approaching $M$ from opposite sides (since $M$ is $2$-sided). With respect to $\nu$, this other ancient solution has negative mean curvature.
$\medskip$

Finally, we can see that these ancient solutions are not solitons. If the ancient solution approaches the minimal surface $M$ from one side as $t \to -\infty$, this means that $M$ may not be translating as it must be slowing down to approach $M$. These ancient solutions may not be just rotating, as that would imply they would not be on just one side of $M$. Neither are these ancient solutions homothetically shrinking, as this would imply that they do not approach any surface as $t \to -\infty$. Finally, we have that combinations of these rigid motions are also impossible. Homothetic shrinking in combination with any other rigid motion is ruled out for the same reason. And these solutions cannot be translating and rotating at the same time, as both motions occur at some constant rates, which would imply that $M_t$ cannot be converging to any surface as $t \to -\infty$. This completes the proof of Theorem \ref{first theorem}.

\section{Proof of Theorem 1.3}

In this section, we will construct the eternal solution described in Theorem \ref{second theorem}, which exists in $\mathbb{R}^{n+1}$ for all $n\geq 2$. This eternal solution will be constructed using a catenoid, similar to what was done in Section 3.
$\medskip$

Recall the notation set in Section \ref{eternal preliminaries}. We let $M^1$ be a catenoid in $\mathbb{R}^{n+1}$, $n\geq 2$, normalized to have radius $1$. The catenoid $M^1$ has width $W_n$ (which is infinite for $n=2$) and is given by a graph $y=w(x)$ which is reflection symmetric about $x=0$, as described in the preliminaries. Moreover, we split up $\mathbb{R}^{n+1}$ into two components: the inside and the outside of the catenoid $M^1$. The outside of the catenoid is the component of $\mathbb{R}^{n+1}$ that the outward unit normal $\nu$ points into. The outward unit normal $\nu$ is the normal to $M^1$ that points away from the axis of rotation.
$\medskip$

By Theorem \ref{first theorem} there exists an ancient solution $M^1_t$ to mean curvature flow, such that $M^1_t$ uniformly converges to $M^1$ as $t \to -\infty$ and is a subset of the outside of $M^1$ for all time. This ancient solution is embedded and is not a soliton. Since $M^1_t$ smoothly converges to $M^1$ as $t \to -\infty$, we may equip $M^1_t$ with a unit normal that is compatible with the outward unit normal $\nu$ to $M^1$. This is the outward unit normal to $M^1_t$, and by Theorem \ref{first theorem}, $M^1_t$ is mean convex with respect to its outward unit normal. Recall that the approximate solutions used to construct $M^1_t$ are of the form $M^1_{\delta} = M^1+\delta \nu$ for $\delta \ll 1$. Since $M^1$ is $O(n) \times O(1)$-invariant and all $M^1_{\delta}$ have this symmetry as well, we get that $M^1_t$ is $O(n)\times O(1)$-invariant with respect to the same axes of symmetry of $M^1$. Similarly, $M^1_t$ may be represented as the rotation of a graph $u_t$, since $M^1_{\delta}$ can be, where $u_t$ is symmetric about $x=0$. We know that $M^1_t$ will remain a subset of the outside of $M^1$ for all time since $M^1_{\delta}$ all lie outside $M^1$ and the mean convexity of $M^1_t$ will force the flow to nest and thus avoid $M^1$ for as long as it exists. This means that $u_t>w$ for as long as $u_t$ exists. And by (\ref{mean curvature of graph rotation}) combined with mean convexity, we have that $u_t$ is convex for all time.
$\medskip$

The last useful property of $M^1_t$ is that it will remain asymptotic to $M^1$ for as long as it exists. The approximate solutions $M^1+\delta \nu$ are asymptotically flat and so by pseudolocality (see Chen-Yin \cite{CY}), they must remain arbitrarily close to $M^1+\delta \nu$ outside a large enough ball. This means that for as long as the flow $(M^1_{\delta})_t$ exists, it will remain asymptotic to $M^1 +\delta \nu$. This means that $M^1_t$, as the limit of these approximate solutions, will remain asymptotic to $M^1$ for as long as it exists.
$\medskip$

With this in hand, we will show that the ancient solution $M^1_t$ is in fact eternal. We will find its asymptotics later.

\begin{prop}\label{longtime existence}
For $n \geq 2$, let $M^1_t \subset \mathbb{R}^{n+1}$ be the ancient solution to mean curvature flow as described above. Then $M^1_t$ exists for all time, $t \in [0,\infty)$, and it is spatially asymptotic to $M^1$ for all time slices.
\end{prop}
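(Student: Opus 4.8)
Recall that $M^1_t$ is, for as long as it exists, the rotation of a convex graph $u_t$ solving \eqref{MCF rotationally symmetric} with $u_t \geq w$, and that the spatial asymptotics to $M^1$ at each time slice has already been established for as long as the flow exists, via pseudolocality for the approximants $M^1+\delta\nu$. So the only thing to prove is that the flow does not become singular in finite time, and for this it suffices, by the standard continuation criterion (using \cite{EH2}), to bound $|A|$ on $M^1_t$ for $t \in [0,T]$ for every finite $T$.

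I would first reduce to the profile curve. Of the $n$ principal curvatures of $M^1_t$, the $n-1$ ``rotational'' ones all equal, in absolute value, $\big(u_t\sqrt{1+(u_t)_x^2}\big)^{-1} \leq u_t^{-1} \leq 1$, since $u_t \geq w \geq w(0) = 1$. Hence a finite-time curvature blow-up could only come through the curvature $k = (u_t)_{xx}\big(1+(u_t)_x^2\big)^{-3/2}$ of the planar profile. To control $k$ the plan is to produce, on $[0,T]$, a local graphical representation of $M^1_t$ over hyperplanes with a uniformly bounded gradient function $v = \langle \nu, \omega\rangle^{-1}$, and then feed this into the Ecker--Huisken estimate, Theorem \ref{EH estimate}, to bound $|A|$ on balls of uniform radius covering $M^1_t$.

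Near its two ends, $M^1$ is a graph $x = \phi(\vec y)$ over a hyperplane orthogonal to the axis of rotation with $|\nabla\phi| \to 0$ at infinity (for $n \geq 3$ the ends are asymptotic to two parallel hyperplanes, and for $n = 2$ to the catenoid's logarithmic ends, which still have bounded graphical gradient away from the neck). Since $M^1_t$ stays spatially asymptotic to $M^1$, on this outer region it remains such a graph with $v$ uniformly bounded, so Theorem \ref{EH estimate} bounds $|A|$ there on $[0,T]$. On the complementary region, where $u_t$ is bounded, the $O(n)$-symmetry lets one cover $M^1_t$ by graph patches over hyperplanes orthogonal to the radial directions; at the center of each patch $v = \sqrt{1+(u_t)_x^2}$. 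Here one must bound $(u_t)_x$ uniformly on $[0,T]$ on the bulk, which is where convexity of $u_t$ enters: the convex profile arc is trapped above the strictly convex catenoid $w$ and stays asymptotic to it (and for $n \geq 3$ is confined to the slab containing $M^1$), so it cannot acquire a vertical tangent in the bulk in finite time; making this quantitative — e.g.\ using rescaled catenoids $R\,w(x/R)$ as lower barriers and convexity to bound slopes from above over a controlled region — gives the required bound on $v$, and Theorem \ref{EH estimate} again bounds $|A|$. Combining the two regions bounds $|A|$ on $M^1_t$ for $t \in [0,T]$; as $T$ was arbitrary, the flow is eternal, and the asymptotics statement follows as noted at the outset.

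The main obstacle is exactly the bulk estimate: the uniform-in-time control of the gradient of the local graph representation. Once convexity is exploited this is not deep, but it requires care with the comparison principle for noncompact hypersurfaces (and, for $n = 2$, with the fact that the bulk slowly becomes flat rather than being confined to a slab). As an alternative, one could instead rule out a first singular time $T_1 < \infty$ directly: using $u_t \geq 1$ there can be no singular point on the axis, while $O(n)$-invariance forces any tangent flow at an off-axis singular point to split off $\mathbb{R}^{n-1}$ and hence reduce to an embedded mean convex self-shrinking curve in the profile plane, i.e.\ a line or a round circle; the circle is incompatible with $u_t$ being a convex graph, so no singularity forms.
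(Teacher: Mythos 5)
Your overall skeleton agrees with the paper's: reduce eternality to a curvature bound on each finite time interval (the spatial asymptotics having already been obtained from pseudolocality before the proposition), reduce to the curvature of the profile, and obtain $|A|$ bounds from Theorem \ref{EH estimate} applied to graphical pieces with bounded gradient function $v$. The genuine gap is exactly at the step you flag as ``the main obstacle'': the gradient bound in the bulk. For a convex graph, a slope bound at an interior point requires an \emph{upper} bound on the height slightly further out, and nothing in your sketch supplies one: a priori the profile could become grim-reaper-like, with steep sides at $|x|=\ell/2$ and a low tip, which is compatible with convexity, with $u_t\geq w$, with confinement to the slab, and with being asymptotic to $M^1$ (closeness to $M^1$ is only guaranteed outside a ball whose radius you do not control). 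Your proposed quantitative device---rescaled catenoids $R\,w(x/R)$ as \emph{lower} barriers---cannot close this: lower barriers give no upper height control; coaxial, coreflective catenoids of different scales all intersect one another (a fact the paper itself uses in Section 5), so they are not even disjoint from the initial surface; and they are static, whereas the solution's height grows without bound. The paper's key ingredient, absent from your proposal, is an upper barrier: a translating grim reaper $\mathcal{G}+ct$ of half-width $\ell<W_n$ placed above $u_0$. Since solutions of (\ref{MCF rotationally symmetric}) are subsolutions of graphical curve shortening flow, avoidance gives $u_t\leq \mathcal{G}+ct$, hence a height bound linear in $t$, hence by convexity a gradient bound on $[-\ell/2,\ell/2]$, hence via Theorem \ref{EH estimate} a curvature bound at the tip and a bound on the tip speed; the tip-speed bound is also what guarantees that the graphical representations persist long enough for Theorem \ref{EH estimate} to apply. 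Away from the tip the paper needs no height bound at all: it writes $M^1_t$ as a graph over hyperplanes tilted by a fixed angle $\theta$, over which $v$ is bounded purely by convexity of the profile (its tangent angle on $\{x\geq \min(\ell/4,1/4)\}$ lies in $[0,\pi/2)$); this tilted-plane covering, rather than your outer/bulk split, is what handles the intermediate heights where the surface is neither near the tip nor yet close to $M^1$.

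Your alternative route (ruling out a first singular time via tangent flows) is also not complete as stated: for a noncompact flow one must first exclude curvature blowing up along points escaping to spatial infinity (again a quantitative use of pseudolocality), and the reduction of an off-axis tangent flow to a multiplicity-one shrinking line or circle invokes existence and classification of tangent flows together with multiplicity-one/mean-convex regularity theory in a noncompact setting---machinery well beyond what the paper uses, and whose hypotheses (e.g.\ excluding a multiplicity-two plane from two sheets drawing together) again require the kind of quantitative barrier control described above. If you add the grim reaper upper barrier and the tilted-plane argument, your first route becomes essentially the paper's proof.
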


\begin{proof}
In order to prove that $M^1_t$ exists for all time $t \in [0,\infty)$, we will show that for each $t$, there is a bound on $|A|^2$ for $M^1_t$.
$\medskip$

Let $u_t$ be the profile curve of $M^1_t$, and let $w$ be the profile curve of $M^1$, considered as graphs over the same axis with the same axes of symmetry. Since $M^1_t$ is mean convex and initially satisfies $u_t >w$, we have that $u_t >w$ for as long as it exists. 
$\medskip$

As mentioned in Section \ref{eternal preliminaries}, note that (\ref{MCF rotationally symmetric}) without the last term is merely the curve shortening flow of the graph $y=u(x)$. Since the second term is negative, a solution to (\ref{MCF rotationally symmetric}) is a subsolution to graphical curve-shortening flow. This means that a solution to (\ref{MCF rotationally symmetric}) starting at $y=u(x)$ will avoid the curve shortening flow of an appropriately chosen graph $y=v(x)$ such that $v(x)>u(x)+c$ for some $c>0$ independent of $x$. Indeed, since $u_t$ is a convex curve which converges uniformly to $w$ as $t \to -\infty$, we may place a grim reaper $\mathcal{G}$ of some half-width $\ell < W_n$ strictly above $u_0$ such that $\mathcal{G}$ avoids $u_0$. The grim reaper $\mathcal{G}$ will be a graph over $(-\ell, \ell)$ and its curve shortening flow will be given by $\mathcal{G}+ct$, where $c>0$ is a constant. We have that $M^1_t$ will remain asymptotic to $M^1$ for as long as it exists and $\mathcal{G}+ct$ will remain asymptotic to $x \pm \ell$. So, if $u_t$ ever intersects $\mathcal{G}+ct$, there will be a point $x_0 \in (-\ell, \ell)$ such that the difference in height between $u_t$ and $\mathcal{G}+ct$ will reach a strict local minimum at $x_0$. Applying the avoidance principle using that $\mathcal{G}+ct$ is a subsolution of (\ref{MCF rotationally symmetric}), we have that $u_t$ will avoid $\mathcal{G}+ct$ for as long as it exists. 
$\medskip$

Suppose that $\mathcal{G}$ is a distance $C$ from $u_0$ at $x=0$. Then, we have that for $x \in [-\frac{\ell}{2}, \frac{\ell}{2}]$, $|\partial_x u_t| \leq C(t)$ for some constant $C(t)$ depending only on $t$, $C$, and $\ell$. Indeed, since $\partial^2_x u_t \geq 0$, $\partial_x u_t $ cannot be too large at $x=\frac{\ell}{2}$ since that would imply $u_t$ would intersect $\mathcal{G}+ct$, which is a contradiction. Here, the dependence on $C$ and $\ell$ is irrelevant and such choices can be fixed from the outset.
$\medskip$

We may use the time-dependent bound on $|\partial_x u_t|$ over $[-\frac{\ell}{2}, \frac{\ell}{2}]$ in combination with Theorem \ref{EH estimate} to bound $|A|^2$ for $M^1_t$ around the tip, which we identify with $u_t(0)$. Identify the $x$-$y$ plane that $u_t$ is in with the plane $(x,y,0,\dots, 0) \in \mathbb{R}^{n+1}$.  Consider the hyperplane $\mathcal{H}$ perpendicular to the normal $\nu(0) = (0,1,0,\dots,0)$ such that $\mathcal{H}$ contains $y=0$. The normal $\nu(0)$ is the outward normal to $M^1_t$ at the point identified with the tip $u_t(0)$ in the $x$-$y$ plane. Consider a ball $B(0,\text{min}(\frac{\ell}{2},\frac{1}{2}))$ of radius $\text{min}(\frac{\ell}{2},\frac{1}{2})$ in $\mathcal{H}$ centered around $0$. Since $M^1$ is normalized to radius $1$, this means $u_t>1$ for all time and so $M^1_t$ will remain graphical over $B(0,\text{min}(\frac{\ell}{2},\frac{1}{2}))$ for all time. Here we use the fact that $u_t >w$ for all time that it exists, so the flow $M^1_t$ will not collapse onto the axis of rotation. 
$\medskip$

Now, consider the quantity $v = \langle \nu_{M^1_t}, \nu(0)\rangle^{-1}$, where $\nu(0)$ is the unit normal (independent of time) to $M^1_t$ corresponding to the normal to $u_t(0)$. By the bound $|\partial_x u_t|< C(t)$ over $[-\frac{\ell}{2}, \frac{\ell}{2}]$, there is a bound on $v$ depending on $t$ in $B(0,\text{min}(\frac{\ell}{2},\frac{1}{2}))$. Using the bound on $v$, we may apply Theorem \ref{EH estimate} with $R=\text{min}(\frac{\ell}{2},\frac{1}{2})$, the hyperplane $\mathcal{H}$, and some $\theta <1$. This gives a bound $|A|^2 < C(t)$ for the part of $M^1_t$ that is graphical over $B(0,\theta\, \text{min}(\frac{l}{2},\frac{1}{2}))$.
$\medskip$

With this in hand, we move on to finding a bound on $|A|^2$ for the rest of $M^1_t$. By the fact that there is a bound on $|A|^2$ around the tip of $u_t$, we have that there is a bound depending on $t$ for the speed of the tip. That is, $\big|\partial_t u_t (0)\big| \leq C(t)$.
$\medskip$

By symmetry of $u_t$ about $x=0$, we can just consider one side, so let us consider $u_t$ over $x>0$. Consider the unit vector $\nu_{\theta} = (-\sin(\theta), \cos(\theta))$ in the $x$-$y$ plane which we identify with $(-\sin(\theta), \cos(\theta), 0, \dots, 0) \in \mathbb{R}^{n+1}$. Consider $\theta>0$ small but fixed. Let $\mathcal{H}_{\theta}$ be the hyperplane that is orthogonal to $\nu_{\theta}$. Let $\mathcal{L} \subset \mathcal{H}_{\theta}$ be the projection of $u_t|_{\{x\geq \text{min}(\frac{\ell}{4},\frac{1}{4})\}}$ onto $\mathcal{H}_{\theta}$, where the last $n-1$ coordinates of $\mathcal{L}$ are zero. Then, we may find a fixed small enough $R>0$ such that for each $x \in \mathcal{L}$, $M^1_t$ will be a graph\footnote{Technically, $M^1_t$ will be a double-sheeted graph over $B(x,R)$ by considering the part of $u_t|_{x<0}$ lying over $B(x,R)$, but this does not affect the application of Theorem \ref{EH estimate}, which will only be applied to the part of $M^1_t$ corresponding to $u_t|_{x>0}$.} over the ball $B(x,R)\subset \mathcal{H}_{\theta}$ for a time depending on $t$. This is possible because $u_t>1$ and the tip $u_t(0)$ moves at a speed only depending on $t$, as shown above.
\begin{figure}
\centering
\includegraphics[scale = .6]{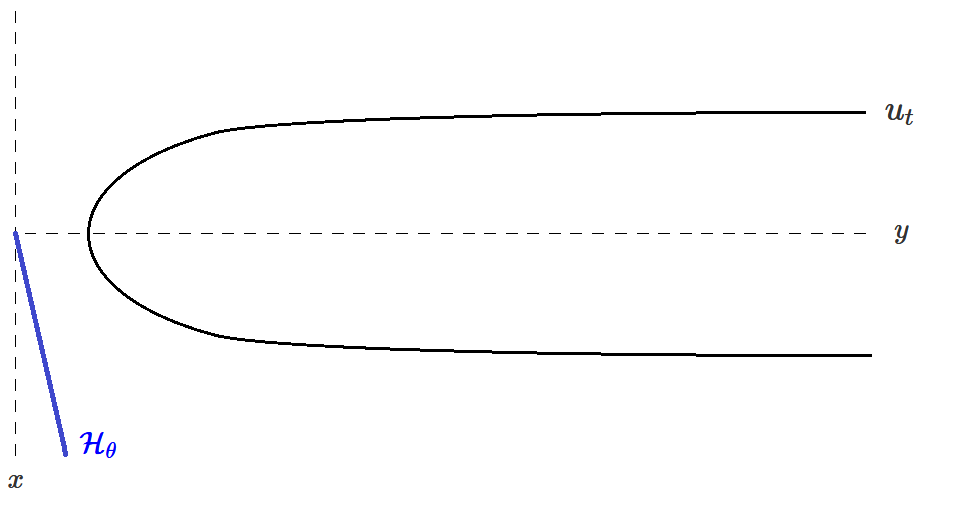}
\caption{An approximate profile of $u_t$ along with the tilted support plane $\mathcal{H}_\theta$. Note that in reality, $\mathcal{H}_\theta$ is not rotationally symmetric about the $x$-axis.}
\end{figure}
$\medskip$

Moreover, the quantity $\langle \nu_{M^1_t}, \nu_{\theta}\rangle^{-1}$, which is just $v$ with respect to $\nu_{\theta}$, is uniformly bounded over such $B(x,R)$. This is because $\mathcal{H}_{\theta}$ is at an angle $-\theta$ with respect to the unit normal $\nu(0)$ at the tip $u_t(0)$, so $\partial_x u_t$ will grow at a linear rate with respect to $\mathcal{H}_{\theta}$ as $u_t$ approaches its asymptote. Thus, applying Theorem \ref{EH estimate} to $B(x,L) \subset \mathcal{H}_{\theta}$ using $R$, the bound on $v$, and an appropriate choice of $\theta$ (given the choice of $\theta$ for the previous application of Theorem \ref{EH estimate}), we get a bound on $|A|^2$ for $u_t$ over $\mathcal{L}$. 
$\medskip$

Putting all of this together, there is a bound $|A|^2 < C(t)$ for $M^1_t$. Since $|A|^2$ is bounded for any $t>0$, this gives that $M^1_t$ will exist for $t \in [0,\infty)$ by applying the short time existence theorem at any finite time to extend the flow. So, $M^1_t$ is an eternal solution as claimed. 
\end{proof}

To understand the asymptotics of the flow, we will need to relate the motion of the profile curve to the curve shortening flow, which will require the following lemma.

\begin{lem}\label{far} Let $M^1_t$ be the eternal solution of Proposition \ref{longtime existence}. As $t \to \infty$, $M^1_t$ becomes infinitely far from its axis of rotation.
\end{lem}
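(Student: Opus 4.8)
The goal is to show that $u_t(0)$, the distance of the tip of the profile curve from the axis of rotation, tends to $\infty$ as $t \to \infty$. The strategy is to argue by contradiction using the mean convexity (hence monotonicity) of the flow together with a rigidity/compactness argument: if the tip stays bounded, then the monotone flow must converge, and the limit must be a minimal surface lying strictly outside the catenoid $M^1$ and asymptotic to it. The contradiction then comes from the fact that the catenoid is, up to scale, the unique $O(n)\times O(1)$-invariant nonflat minimal surface, and no scaled catenoid lies strictly outside $M^1$ while remaining asymptotic to it.

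\textbf{The plan.} The goal is to show that the tip value $u_t(0)$, which measures the distance of the profile curve of $M^1_t$ from the axis of rotation, tends to $\infty$ as $t\to\infty$. The strategy is a proof by contradiction exploiting the mean convexity of $M^1_t$: by Corollary 4.4 of \cite{EH2}, $M^1_t$ stays mean convex for all time, and since it is a rotationally symmetric graph $u_t>w$, equation (\ref{mean curvature of graph rotation}) shows that the profile curve moves monotonically away from $M^1$. In particular $u_t(0)$ is nondecreasing in $t$. Suppose for contradiction that $u_t(0)$ stays bounded, say $u_t(0)\le K$ for all $t$. Since $u_t$ is convex and trapped below the shifted grim reaper $\mathcal{G}+ct$ (as constructed in the proof of Proposition \ref{longtime existence}) — wait, that bound drifts with $t$; instead I use monotonicity together with convexity: because $u_t(0)\le K$ and $u_t$ lies outside $M^1$ and is asymptotic to $M^1$, the whole curve $u_t$ is trapped in the bounded-in-$y$ region between $w$ and a fixed barrier. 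Concretely, since $u_t$ is monotone nondecreasing in $t$ at every fixed $x$ (mean convexity forces pointwise monotonicity of graphical rotationally symmetric flows), and since at $x=0$ it is bounded by $K$, convexity of $u_t$ plus the asymptotic condition forces $u_t$ to stay in a compact region uniformly in $t$.

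\textbf{Key steps.} First, I record that mean convexity with the outward normal, combined with (\ref{mean curvature of graph rotation}) and $u_t>w$, gives $\partial_t u_t \ge 0$ pointwise, so $u_t$ increases monotonically to some limit profile $u_\infty(x)\in(w(x),\infty]$. Second, I use the curvature estimates from Proposition \ref{longtime existence}: although those gave only time-dependent bounds $|A|^2<C(t)$, under the standing assumption $u_t(0)\le K$ one can revisit the Ecker–Huisken argument and obtain \emph{uniform-in-time} bounds on $|\partial_x u_t|$ on compact $x$-intervals and hence uniform $|A|^2$ bounds — here the grim reaper barrier $\mathcal{G}+ct$ must be replaced by a \emph{stationary} barrier, namely a fixed grim reaper or the observation that $u_\infty$ itself (being a monotone limit of convex functions bounded at $0$) is finite and convex on its domain. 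Third, with uniform curvature bounds, $M^1_t$ converges smoothly (along subsequences, but by monotonicity fully) to a complete smooth limit hypersurface $M^1_\infty$ which is $O(n)\times O(1)$-invariant, lies in the closed outside of $M^1$, touches nothing, and is asymptotic to $M^1$. Fourth — as in the proof of Lemma \ref{l3} — mean convexity plus the monotone convergence forces $H\equiv 0$ on $M^1_\infty$: if $H(p)>c>0$ somewhere, integrating the speed along the spacetime track of $p$ gives infinite displacement, contradicting convergence. So $M^1_\infty$ is a complete $O(n)\times O(1)$-invariant nonflat minimal surface (nonflat because it is a nontrivial rotation asymptotic to $M^1$), hence a scaled catenoid $R\,w(x/R)$. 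Fifth, I derive the contradiction: this scaled catenoid must lie in the closed outside of the unit catenoid $M^1$ and be asymptotic to it; but two distinct coaxial catenoids of different radii cross each other, and the one of radius $R>1$ is not asymptotic to $M^1$ (its ends have slope approaching the cone of a different opening), while $R=1$ would force $M^1_\infty=M^1$, contradicting $u_t>w$ strictly (the flow left $M^1$ by a definite amount). Hence $u_t(0)\to\infty$.

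\textbf{Main obstacle.} The delicate point is the second step: upgrading the time-dependent curvature estimates of Proposition \ref{longtime existence} to bounds that are uniform in $t$, which is exactly what fails in general (and is what makes the conclusion of the lemma true rather than vacuous). The resolution is that the standing hypothesis $u_t(0)\le K$, together with convexity, pins the profile curve $u_t$ into a fixed compact region of the $(x,y)$-plane for all $t$ — so one can choose the support hyperplanes $\mathcal{H}$, $\mathcal{H}_\theta$ and the radii $R$ in the Ecker–Huisken estimate once and for all, with $v=\langle \nu_{M^1_t},\omega\rangle^{-1}$ controlled uniformly because a convex graph confined to a bounded box has uniformly bounded slope on compact subintervals. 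One must be slightly careful near the asymptotic ends, but pseudolocality (as already invoked for the asymptotic behavior of $M^1_t$) keeps the ends close to those of $M^1$ uniformly in time, so the curvature there is controlled by that of the catenoid. Feeding these uniform bounds into the compactness argument then closes the contradiction.
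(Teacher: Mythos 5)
Your overall skeleton (contradiction, monotone flow trapped outside the catenoid, limit is a coaxial catenoid which must cross $M^1$) matches the paper's argument in the case where $|A|^2$ stays uniformly bounded. But your crucial Step 2 — upgrading the time-dependent bounds of Proposition \ref{longtime existence} to uniform-in-time curvature bounds from the hypothesis $u_t(0)\le K$ — contains a genuine gap. The assertion that ``$u_t(0)\le K$ together with convexity pins the profile curve into a fixed compact region, so a convex graph confined to a bounded box has uniformly bounded slope on compact subintervals'' is not justified: boundedness of the tip plus convexity gives only \emph{lower} barriers (tangent lines, $u_t\ge w$, pointwise monotonicity), not an upper barrier. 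The only upper barrier available in the construction is the translating grim reaper $\mathcal{G}+ct$, which drifts to infinity, and that is exactly why the gradient bound $|\partial_x u_t|\le C(t)$ near the tip in Proposition \ref{longtime existence} degrades in time. Nothing in your argument excludes the dangerous scenario: a steepening ``spike'' near the tip, where $u_t(0)$ stays bounded but $u_t(x)$ becomes arbitrarily large (and the slope arbitrarily steep) at fixed small $x$, i.e. curvature blowing up near the tip at bounded height. (For $n\ge 3$ the curve is in any case never in a compact box, since its ends rise to infinity along the catenoid's ends near $x=\pm W_n$; pseudolocality and the tilted-plane Ecker--Huisken estimate do control the region away from the tip once the tip is nearly stationary, but they say nothing at the tip itself.) So the step where you ``choose the support hyperplanes and radii once and for all'' and bound $v$ uniformly is precisely the point that fails, and it is the heart of the lemma rather than a technicality.

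The paper handles this by a case split rather than by proving uniform bounds. If $|A|^2$ stays bounded, it argues as you do. If not, it first uses Theorem \ref{EH estimate} over the tilted plane (using that $u_t(0)\to C<\infty$ makes the tip nearly stationary) to show blowup can only occur near the tip; then, by reflection symmetry, high-curvature points come in pairs $a(t),\overline{a}(t)$ with $H>2$ (mean curvature, not just $|A|$, blows up since the points stay away from the axis), while $H(u_t(0))\to 0$ because the tip height converges. Hence $H$ attains an interior minimum $y_t$ on the arc between $a(t)$ and $\overline{a}(t)$; mean convexity and the strict maximum principle force $H(y_t)>0$ to be nondecreasing (while $H(y_t)<2$), so $H$ at the tip is bounded away from zero — contradicting $H(u_t(0))\to 0$. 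To fix your proof you would need either this kind of maximum-principle argument at the tip or an actual time-uniform upper barrier near the tip, neither of which your current Step 2 provides.
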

\begin{proof}
Equivalently, we will prove that $u_t$ will become infinitely far from the $y=0$ axis. Suppose that this is not the case. Since $u_t(0)$ is the unique minimum of the flow, we have that $\lim_{t \to \infty} u_t(0)  = C < \infty$. This implies that $\lim_{t \to \infty} \partial_t u_t(0) = 0$. 
$\medskip$

If $|A|^2$ is uniformly bounded for $M^1_t$ as $t \to \infty$, then $M^1_t$ converges to a smooth minimal surface $N$ as $t \to \infty$. However, $N$ must be given by the rotation of a graph symmetric about $x=0$. So, $N$ is a catenoid symmetric about $x=0$, but this must intersect $M^1$, which is a contradiction. 
$\medskip$

Now, suppose that $|A|^2$ is not uniformly bounded as $t \to \infty$. Let $\mathcal{H}_{\theta}$ be the tilted plane as above. Let $\mathcal{L}$ be the projection of $\{x>0\}\cap \{y\geq C\}$ onto $\mathcal{H}_{\theta}$. Here, we are technically taking the part of $\mathcal{L}$ with the last $n-1$ coordinates zero. Since the tip of $u_t$ is stationary as $t \to \infty$, i.e. $u_t(0) \to C$, we have that $M^1_t$ is a graph over small balls centered on $\mathcal{L} \subset \mathcal{H}_{\theta}$ with controlled derivative for all time $t$. We may now apply Theorem \ref{EH estimate} to $M^1_t$ over $\mathcal{L}$. We find that for any distance $d>0$, the set of points on $M^1_t$ which is distance greater than $d$ from the tip $u_t(0)$ has uniformly bounded $|A|^2$, depending on $d$, for all $t \in [0,\infty)$.
$\medskip$

Since we are supposing $|A|^2 \to \infty$ as $t \to \infty$, this leaves the possibility that the curvature is blowing up as $t \to \infty$ near the tip $u_t(0)$. In other words, we may find times $t_i$ and points $q_i \in M^1_{t_i}$ such that $|A|^2(q_i) \to \infty$, where $q_i$ are all within a uniform distance in $\mathbb{R}^{n+1}$ from the tip point $(0,u_t(0), 0,\dots, 0)$. By reflection symmetry across $x=0$, these points come in pairs $q_i, \overline{q_i}$. Note that such points $q_i$ are uniformly bounded away, by mean convexity of the flow, from the axis of rotation and hence we must have $H(q_i) \to \infty$. In particular, $H(q_i) > 2$ for $i$ sufficiently large.
$\medskip$

Because $\lim_{t\to \infty} u_t(0) = C < \infty$ this implies that $H(u_t(0)) \to 0$. Thus, $M^1_t$ must achieve an interior minimum of $H$ at some point $y_t$ on the graph of $u_t$ between $q_i$ and $\overline{q_i}$ for all $i$ large enough. It is easy to see something even stronger in fact:  one can actually find $a(t)$, $\overline{a}(t)$, in lieu of the discrete $q_i$, such that $\liminf_{t \to \infty}|A|^2(a(t))>2$. Then, since $H(u_t(0)) \to 0$, there is $y_t$ on the graph of $u_t$ between $a(t)$ and $\overline{a}(t)$ such that for all sufficiently large times $t$, $M^1_t$ achieves an interior minimum of $H$ at $y_t$. To see the existence of such $a(t)$ for sufficiently large times, if $a(t), \overline{a}(t)$ did not exist, we would find a sequence of times where $M^1_t$ has uniformly bounded curvature, which would converge to a minimal catenoid, as before. This is a contradiction as such a catenoid must intersect $M^1$ yet must also be distinct from $M^1$.
$\medskip$

Using that the tip $u_t(0)$ lies between $a(t)$ and $\overline{a}(t)$ on the graph of $u_t$, we see it has $H$ lower bounded by $H(y_t)$. The $y_t$, by mean convexity and the strict maximum principle, must have $H(y_t) > 0$ and $H(y_t)$ must be increasing for all time, as long as $H(y_t)<2$. This implies that $H(u_t(0))$ must be bounded away from zero for all time, contradicting the fact that $H(u_t(0)) \to 0$. Thus, $M^1_t$ must become infinitely far from its axis of rotation.
\end{proof}

Since the profile curve moves infinitely far from the origin, it behaves like the curve shortening flow. To understand the asymptotics, we will need to take a pointed limit of the profile curve $u_t$ as $t \to \infty$, but this requires uniform curvature bounds, which we find in the following lemma. 

\begin{lem}
Let $M^1_t$ be the eternal solution of Proposition \ref{longtime existence}. Then, there exists $C < \infty$ such that $|A|^2 < C$ for $t \in (-\infty, \infty)$.
\end{lem}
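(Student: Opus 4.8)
The plan is to bound $|A|^2$ on three time regimes and patch. As $t\to-\infty$, Theorem~\ref{first theorem} gives that $M^1_t$ converges smoothly and uniformly to the catenoid $M^1$, so $|A|^2\le \sup_{M^1}|A|^2+1$ for $t\le T_-$ for some $T_-\in\mathbb{R}$. On any finite interval $[T_-,T_+]$, the curvature is bounded by Proposition~\ref{longtime existence} (which produced, at each finite time, a bound $|A|^2<C(t)$) together with standard parabolic theory, using that $M^1_t$ stays asymptotic to $M^1$ so that $\sup_{M^1_t}|A|^2$ is attained on a compact piece depending continuously on $t$. The content is therefore the regime $t\to+\infty$. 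Throughout write $u_t$ for the profile curve of $M^1_t$ and $w$ for the profile curve of $M^1$; recall that $u_t$ is convex for all $t$, that $u_t>w$ (so $u_t\ge u_t(0)\ge 1$), that $u_t(0)\to\infty$ as $t\to\infty$ (Lemma~\ref{far}), and that $M^1_t$ remains asymptotic to $M^1$, which (as in the discussion preceding the lemma) we take to hold uniformly in $t$.

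\textbf{A uniform gradient bound for the profile curve on a central region.} Since $M^1_t$ remains uniformly asymptotic to the fixed surface $M^1$, there is a fixed $\delta_0>0$ and $C_{\mathrm{end}}<\infty$ so that, uniformly in $t$, $u_t$ is $C^2$-within-$1$ of $w$ on the end region $\{\,W_n-\delta_0\le |x|<W_n\,\}$; in particular $|A|^2\le C_{\mathrm{end}}$ on $M^1_t\cap\{|x|\ge W_n-\delta_0\}$ (the catenoid has globally bounded $|A|^2$, decaying at its ends), and $|u_t'(W_n-\tfrac{\delta_0}{2})|\le |w'(W_n-\tfrac{\delta_0}{2})|+1=:M_0$. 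Because $u_t$ is convex, $u_t'$ is nondecreasing and odd, so this slope bound at the single point $W_n-\tfrac{\delta_0}{2}$ propagates inward: $|u_t'|\le M_0$ on all of $[-(W_n-\tfrac{\delta_0}{2}),\,W_n-\tfrac{\delta_0}{2}]$, uniformly in $t$. (For $n=2$, where $W_n=\infty$ and $w=\cosh$, the same argument with $W_n-\tfrac{\delta_0}{2}$ replaced by a large fixed $L$ gives $|u_t'|\le M_0$ on $[-L,L]$ and $|A|^2$ bounded on $\{|x|\ge L\}$.)

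\textbf{Ecker--Huisken on the central region.} Fix $t$ large enough that $u_s(0)>\delta_0$ for all $s\in[t-1,t]$ (possible by Lemma~\ref{far}). For any $x_0$ with $|x_0|\le W_n-\delta_0$, the half $\{y>0\}$ of $M^1_s$ is, for each $s\in[t-1,t]$, a compact graph $y=\sqrt{\,u_s(\cdot)^2-|z|^2\,}$ over the ball $B\big((x_0,0,\dots,0),\tfrac{\delta_0}{2}\big)$ in the hyperplane $\mathcal{H}$ orthogonal to $(0,1,0,\dots,0)$; since $|z|\le\tfrac{\delta_0}{2}$ is fixed while $u_s\ge u_s(0)\to\infty$ and $|u_s'|\le M_0$ on the relevant $x$-range, the gradient of this graph --- hence $v=\langle\nu_{M^1_s},(0,1,0,\dots,0)\rangle^{-1}$ --- is bounded by a constant $V=V(M_0)$, uniformly in $s$ and $x_0$. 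Applying Theorem~\ref{EH estimate} on $[t-1,t]$ with $R=\tfrac{\delta_0}{2}$ and $\theta=\tfrac12$ gives $|A|^2\le C(n,\delta_0,M_0)=:C_\ast$ at the point $(x_0,u_t(x_0),0,\dots,0)\in M^1_t$. Since $M^1_t$ is $O(n)\times O(1)$-invariant, $|A|^2$ is constant on each latitude sphere $\{x=x_0\}\cap M^1_t$, so $|A|^2\le C_\ast$ on $M^1_t\cap\{|x|\le W_n-\delta_0\}$. Combined with $|A|^2\le C_{\mathrm{end}}$ on $M^1_t\cap\{|x|\ge W_n-\delta_0\}$, this gives $|A|^2\le\max(C_\ast,C_{\mathrm{end}})$ on $M^1_t$ for all large $t$; with the bounds on $(-\infty,T_+]$ this finishes the proof. (The $n=2$ case is identical with $W_n-\delta_0$ replaced by $L$.)

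\textbf{Main obstacle.} The difficulty is entirely in the regime $t\to+\infty$, and the crucial mechanism is the transport of the catenoidal control at the ends into the interior via convexity of the profile curve; once a uniform gradient bound is available, Ecker--Huisken does the rest, the only mild technicality being that $M^1_t$ is not globally graphical and must be covered by graphical pieces over suitable hyperplanes (handled cleanly by the $O(n)$-symmetry). An alternative route is a blow-up argument: if $|A|^2$ were unbounded, parabolic rescaling about a sequence of almost-maximal curvature points would --- since the rotational principal curvatures of $M^1_t$ are $O(1/u_t(0))\to 0$ --- converge to a convex eternal solution of curve shortening flow times $\mathbb{R}^{n-1}$, hence to a grim reaper times $\mathbb{R}^{n-1}$ by the Daskalopoulos--Hamilton--\v{S}e\v{s}um classification, forcing the profile curve to develop near-vertical tangents at an interior point and contradicting the convexity-plus-asymptotics slope bound above.
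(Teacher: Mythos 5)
Your argument for the decisive regime $t\to+\infty$ hinges on the claim that, \emph{uniformly in $t$}, the profile $u_t$ is $C^2$-within-$1$ of the catenoid profile $w$ on a fixed end region $\{W_n-\delta_0\le |x|<W_n\}$; from this you extract the slope bound $M_0$ at $x=W_n-\tfrac{\delta_0}{2}$ and the end bound $C_{\mathrm{end}}$, and the rest (convexity propagation, Ecker--Huisken) follows. That claim is not available and is in fact false for large $t$. What pseudolocality gives (and what the paper uses) is that each fixed time slice is asymptotic to $M^1$ at \emph{spatial infinity}; it gives no control on a fixed $x$-region uniformly in time, and the region where $M^1_t$ is catenoid-like recedes as $t$ grows. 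Concretely, by Lemma \ref{far} the tip $u_t(0)\to\infty$, and since $u_t(0)$ is the minimum of the convex, symmetric profile, $u_t(W_n-\delta_0)\ge u_t(0)\to\infty$ while $w(W_n-\delta_0)$ is a fixed number; so the $C^0$ distance between $u_t$ and $w$ on your end region diverges, and the point of $M^1_t$ over $x=W_n-\delta_0$ stays a definite distance (of order $\delta_0$) from the catenoid, whose points at those heights have $|x|$ near $W_n$. (The $n=2$ version with $L$ in place of $W_n-\delta_0$ fails the same way.) The uniform interior slope bound you want is, a posteriori, true only because $u_t$ converges after recentering to a grim reaper of width $W_n$ --- but that asymptotic is proved in the paper \emph{after}, and by means of, the present lemma, so assuming any uniform-in-$t$ control of this kind here is circular. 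Producing a bound that does not degenerate as $t\to\infty$ is exactly the content of the lemma, and your construction does not supply the missing ingredient.

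The paper's proof takes a different route precisely to avoid this: assuming $|A|$ is unbounded, it rescales the profile equation (\ref{MCF rotationally symmetric}) about points of (nearly) maximal curvature by $\lambda_m=\sup_{t\le t_m}\sup_{M^1_t}|A|$; since $u_t\to\infty$ (Lemma \ref{far}) the rotational term $\tfrac{n-1}{u}$ vanishes in the limit, yielding a nonflat, convex, \emph{ancient}, graphical solution of curve shortening flow defined over a half-infinite interval or all of $\mathbb{R}$, which contradicts the Bourni--Langford--Tinaglia classification. Your closing ``alternative route'' is close in spirit to this, but as written it treats the blow-up limit as an eternal solution and invokes a classification of eternal solutions (the limit is only ancient, living on $(-\infty,0]$), and its final contradiction again appeals to the unjustified slope bound; so it does not close the gap either.
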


\begin{proof}
Suppose not. Then, we may find a sequence of times $t_m \to \infty$ such that $M^1_{t_m}$ achieves the supremum
\begin{equation}\lambda_m:= \sup_{t \leq t_m} \sup_{M^1_t} |A|
\end{equation}
and $\lambda_m \to \infty$. 
$\medskip$

\noindent We first pick $x_m \geq 0$ such that $u_t(x_m)$ realizes the supremum $\lambda_m$. It is possible to pick all $x_m$ positive by the symmetry of $u_t$ about $x=0$. Then, define 
\begin{equation}\label{rescalings1}
    u^m_t(x) := \lambda_m \big(u_{t/\lambda_m^2 +t_m}(x/\lambda_m + x_m) - u_{t_m}(x_m)\big)
\end{equation}
By (\ref{MCF rotationally symmetric}), we have that $u^m_t$ satisfies the following equation:

\begin{equation}\label{equation the approximate flows satisfy}
    \frac{\partial u^m_t}{\partial t} = \frac{(u^m_t)_{xx}}{1+(u^m_t)_x^2} - \frac{n-1}{\lambda_m u_{t/\lambda_m^2 +t_m}(x/\lambda_m + x_m)}
\end{equation}

For $M^1_t$, $|A|>k$, where $k$ is the curvature of the profile curve $u_t$. Because $u_t$ becomes infinitely far from the axis of rotation and $|A| \to \infty$, $k \nnearrow |A|$ at $x_m$ as $t_m \to \infty$. Then, by definition of $\lambda_m$, we have that $k^m_t \leq 1$ for $t \leq 0$ and $k^m_0(x_m) \nnearrow 1$ as $m \to \infty$. Moreover, the second term in (\ref{equation the approximate flows satisfy}) must approach zero uniformly fast on any compact subset of negative times as $m \to \infty$ by the fact that $u_t$ becomes infinitely far from the axis of rotation. This means that we may pass to a limit to find the nonempty embedded flow $u^{\infty}_t$, defined on $(-\infty, 0]$, which satisfies the equation:
\begin{equation}\label{graphical CSF}
    \frac{\partial u^{\infty}_t}{\partial t} = \frac{(u^{\infty}_t)_{xx}}{1+(u^{\infty}_t)_x^2}
\end{equation}
The equation (\ref{graphical CSF}) is the graphical version of curve shortening flow. Since each $u^m_t$ is a convex curve, $u^{\infty}_t$ is also convex. In short, we have found an embedded convex ancient solution to curve shortening flow which is graphical. In fact, $u_t^{\infty}$ is nonflat since there exists a point $x_0$ on $u_t^{\infty}$ where $k^{\infty}_0(x_0) =1$, which follows from the fact that $k^m_0(x_m) \nnearrow 1$ as $m \to \infty$.
$\medskip$


For $n=2$, $u_t$ is defined over all of $x \in (-\infty, \infty)$, but for $n \geq 3$, $u_t$ will be defined over $x \in (-c,c)$. So, for $n=2$, $u^{\infty}_t$ is defined for $x \in (-\infty, \infty)$. And for $n\geq 3$, since $u^m_t$ is defined for $x \in (-\lambda_m (c+x_m), \lambda_m (c - x_m))$ and since $x_m \geq 0$, we have that $u^{\infty}_t$ is defined for $x \in (-\infty, C)$ for some $0 \leq C \leq \infty$, using that $\lambda_m \to \infty$. By the recent full classification of embedded convex ancient solutions to curve shortening flow due to Bourni-Langford-Tinaglia~\cite{BLT2}, there does not exist an embedded convex nonflat ancient solution to curve shortening flow which is graphical over either a half-infinite interval or all of $\mathbb{R}$. This is a contradiction since we found earlier that $u^{\infty}_t$ is nonflat. Thus, there exists some $C$ such that $|A|^2< C$ for $M^1_t$ for all $t$.
\end{proof}

Since $M^1_t$ has uniformly bounded curvature as $ t \to \infty$, we can now extract a pointed limit of $u_t$ as $t \to \infty$. Indeed, for a sequence of times $t_m \to \infty$, define 
\begin{equation}\label{rescalings2}
    v^m_t(x) := u_{t+t_m}(x) - u_{t_m}(0)
\end{equation}
Using that $u_t$ becomes infinitely far from the axis of rotation as $t \to \infty$, we may extract an embedded convex ancient solution $v^{\infty}_t$ which is graphical, just as in (\ref{equation the approximate flows satisfy}) and (\ref{graphical CSF}). For $n=2$, $v^{\infty}_t$ is defined for $x \in (-\infty, \infty)$ and for $n \geq 3$, $v^{\infty}_t$ is defined for $x \in (-c,c)$. Again, by the classification of Bourni-Langford-Tinaglia~\cite{BLT2}, we have that for $n=2$, $v^{\infty}_t$ is a line and for $n \geq 3$, $v^{\infty}_t$ is a grim reaper with width $c$. Since the choice of limit sequence $t_m$ was arbitrary, we know that these must be the unique limits. Thus, for $n \geq 3$, $u_t$ smoothly converges as $t \to \infty$ to a grim reaper with the same width as that of the catenoid $M^1$ (which follows from the fact that $M^1_t$ must be asymptotic to $M^1$ for all time). And for $n=2$, $u_t$ converges on all compact subsets to a line, in the sense that the curvature $k_t$ approaches zero uniformly as $t \to \infty$.

\section{Proof of Theorem 1.5}
In this section, we prove the partial uniqueness result, Theorem \ref{uniqueness}. Our first step is to understand the topology of such $M_t$, which is simple:
\begin{lem} $M_t$ is an embedded cylinder $S^{n-1} \times \R$.
\end{lem}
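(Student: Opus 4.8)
The plan is to show that a connected embedded $O(n)\times O(1)$-invariant nonflat eternal solution $M_t$ with bounded curvature and a sign on $H$ must be topologically $S^{n-1}\times\mathbb{R}$ by analyzing its profile curve. Since $M_t$ is $O(n)\times O(1)$-invariant, it is the rotation of a planar curve $\gamma_t$ about an axis, and the $O(1)$ factor forces $\gamma_t$ to be symmetric across a hyperplane through that axis; moreover $\gamma_t$ cannot cross the axis (it would fail to be embedded after rotation unless it passed through orthogonally at an isolated point, which I will need to rule out). So the first step is to argue $\gamma_t$ stays a bounded distance away from the axis of rotation, i.e. $M_t$ has a genuine ``neck'' and does not touch the axis. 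If $\gamma_t$ met the axis, the surface near that point would be a topological ball capping off; combined with the sign on mean curvature and bounded curvature, I would run a comparison/avoidance argument — either against shrinking spheres or against the fact that a capped rotationally symmetric piece with $H$ of one sign behaves like a contracting cap — to derive a contradiction with eternal existence (a cap region cannot persist for all negative time without the curvature blowing up, by a backwards uniqueness / monotonicity argument). This is essentially the same mechanism used in Lemma~\ref{far} and the curvature lemmas of Section 4, where a bounded rotationally symmetric minimal-type limit must intersect the catenoid.

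Second, once $\gamma_t$ is confined to the region $\{r \geq \varepsilon\}$ away from the axis for each $t$, the surface $M_t$ is a genuine graph-type rotation of a curve that does not close up across the axis, so each time slice is diffeomorphic to $S^{n-1}\times I$ where $I$ is the domain interval of the profile curve (the $S^{n-1}$ coming from the $O(n)$-orbit of a point at fixed distance from the axis, which is a round sphere since that distance is positive). It remains to see $I = \mathbb{R}$ rather than a bounded or half-infinite interval. Here I would use connectedness together with the absence of boundary: $\gamma_t$ is a complete embedded curve in the half-plane $\{r>0\}$ (completeness coming from the eternal solution being a complete hypersurface), symmetric across the axis-hyperplane, not meeting $\{r=0\}$, so each of its two symmetric ends must escape to infinity in the half-plane. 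If an end stayed bounded, bounded curvature would force it to limit onto a closed curve or spiral, contradicting embeddedness or completeness; so both ends are proper and $I \cong \mathbb{R}$, giving $M_t \cong S^{n-1}\times\mathbb{R}$.

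The main obstacle I expect is the first step: rigorously excluding the possibility that the profile curve touches (or asymptotically approaches) the axis of rotation, so that $M_t$ is forced to be a cylinder rather than, say, a rotationally symmetric sphere-like or bowl-like object. The sign on $H$ and the $O(n)\times O(1)$-invariance are doing real work here — the bowl soliton is explicitly excluded precisely because it is only $O(n)$-invariant — and I anticipate the cleanest route is an avoidance-principle argument: if a cap forms, slide a large sphere or a suitable barrier to show the flow cannot have existed for all $t\in(-\infty,0]$, leveraging Huisken's monotonicity formula and the uniform curvature bound (which rules out Type II behavior and forces a smooth limit, necessarily a minimal rotationally symmetric surface, i.e. a catenoid or a plane, contradicting either nonflatness or the cap assumption). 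The other steps are then routine topological bookkeeping.
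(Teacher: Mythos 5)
Your second step (the topological bookkeeping once the profile curve avoids the axis) is fine, though heavier than necessary: once $\gamma_t$ misses the axis, connectedness gives $\gamma_t\cong\mathbb{R}$ or $S^1$, and $S^1$ is excluded simply because a compact flow becomes extinct in finite time, contradicting eternality; no discussion of bounded curvature, spirals, or properness of ends is needed. The genuine gap is in your first step, which you yourself flag as the main obstacle but never actually close. The mechanism you propose there does not work: the claim that a rotationally symmetric cap with a sign on $H$ and uniformly bounded curvature ``cannot persist for all negative time'' is false --- the bowl soliton is exactly such an object, eternal, mean convex, with bounded curvature and a cap on the axis. Sphere barriers, Huisken monotonicity, and backwards-in-time limits cannot distinguish your hypothetical surface from the bowl; indeed, even if the $t\to-\infty$ limit exists and is a rotationally symmetric minimal surface, it could be a plane $\{x=a\}$, which \emph{does} meet the axis, so no contradiction with ``nonflatness or the cap assumption'' follows. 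You correctly observe that the $O(1)$ factor is what excludes the bowl, but you never convert that observation into an argument.

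The argument the paper uses is elementary and is exactly the missing step: if $\gamma_t$ meets the axis of rotation, smoothness and embeddedness of $M_t$ force an orthogonal crossing at some $x=x_0$, and $x_0\neq 0$ since $M_t$ is not the plane $\{x=0\}$; embeddedness then confines $\gamma_t$ to one side, $\{x\le x_0\}$ or $\{x\ge x_0\}$, and the reflection symmetry across $\{x=0\}$ produces a second orthogonal crossing at $-x_0$ and confines $\gamma_t$ to the slab $\{|x|\le |x_0|\}$. A connected profile curve trapped in a slab and capped on the axis at both ends yields a \emph{compact} $M_t$, which is impossible for an eternal solution. In other words, the cap is not ruled out by any parabolic estimate but by the purely topological fact that, under the $O(n)\times O(1)$ symmetry, one cap forces two caps and hence compactness; neither the curvature bound nor the sign on $H$ nor any monotonicity formula is needed for this lemma. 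As written, your proposal leaves the crux of the lemma unproved and the route you sketch for it would fail.
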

\begin{proof} Since $M_t$ is $O(n)\times O(1)$-invariant, it can be denoted by a profile curve $\gamma_t$ in the $x$-$y$ plane, where $x$ is the axis of rotation $\gamma_t$ is reflection symmetric about $y$. We first see it suffices to show $\gamma_t$ stays on one side of the axis of rotation. Indeed, since $M_t$ is connected, $\gamma_t$ is connected and so must be diffeomorphic to either $\R$ or $S^1$. It cannot be $S^1$ because $M_t$ is eternal and hence noncompact. So $\gamma_t$ must be diffeomorphic to $\R$. Thus, if $\gamma_t$ remains on one side of the axis of rotation, $M_t$ is diffeomorphic to an embedded $S^{n-1} \times \R$.
$\medskip$

To see that $\gamma_t$ stays on one side of the axis of rotation, the $x$-axis, we use the reflection symmetry. Suppose $\gamma_t$ crosses the $x$-axis at $x=x_0$. Then since $M_t$ is smooth and embedded, $\gamma_t$ must cross the $x$-axis orthogonally at $x=x_0$. By embeddedness and reflection symmetry across $\{x=0\}$, $x_0 \neq 0$ as $M_t$ may not just be the vertical line $\{x=0\}$. So, $\gamma_t$ must be a subset of either $\{x \leq x_0\}$ or $\{x \geq x_0\}$. This implies that by reflection symmetry and connectedness of $M_t$, we have that $\gamma_t \subset \{x \leq |x_0|\}$. Since $\gamma_t$ crosses the $x$-axis at $x=x_0$, it does so at $x=-x_0$ as well, which implies that $M_t$ is compact. This is a contradiction as $M_t$ must be noncompact since it is an eternal solution. Thus, $\gamma_t$ stays on one side of of the axis of rotation.
\end{proof}

Since the eternal solution $M_t$ is rotationally symmetric and is an embedded cylinder, $M_t$ can be represented as the rotation of a smooth profile curve $\gamma_t$ which lies above the axis of rotation. In the notation established in the lemma, the points above the axis of rotation are the points with $y > 0$, and we may assume without loss of generality that $\gamma_t \subset \{y>0\}$. $M_t$ is also $O(1)$-invariant, so let the $y$-axis be the axis of reflection for $M_t$. Since $M_t$ has uniformly bounded curvature for all time, $\gamma_t$ must remain a uniform distance from the axis of rotation for all time. So, $\gamma_t \subset \{y \geq c>0\}$. In particular, this means that $M_t$ is nonentire, i.e. the flow of $M_t$ does not sweep out all of spacetime. 
$\medskip$

The flow $M_t$ is an embedded cylinder so it divides $\mathbb{R}^{n+1}$ into two connected components: the inside and the outside. The outside is defined to be the component that does not contain the axis of rotation. Let the outward-pointing normal to $M_t$ be the normal that points into the outside component. $M_t$ is assumed to have a sign on mean curvature, so it must be mean convex with respect to either the inward-pointing or the outward-pointing normal. We will deal with these cases separately, which completes the proof of the theorem.

\begin{lem}\label{outward pointing}
Suppose $M_t$ is mean convex with respect to the outward-pointing normal. Then, $M_t$ satisfies the conclusions of Theorem \ref{uniqueness}.
\end{lem}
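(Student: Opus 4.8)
The plan is to show that if $M_t$ is mean convex with respect to the outward-pointing normal, then the profile curve $\gamma_t$, written as a graph $u_t$ over the axis of rotation (which is valid since $\gamma_t$ stays on one side), is convex for all time, and that the flow moves monotonically outward and avoids some catenoid. First I would use the mean curvature formula (\ref{mean curvature of graph rotation}), namely $H = k - \frac{n-1}{r}\cos\theta$, combined with mean convexity $H \geq 0$, to deduce that $k \geq \frac{n-1}{r}\cos\theta \geq 0$, hence the profile curve is weakly convex; the strict maximum principle applied to the evolution of $k$ (or of $H$) then upgrades this to strict convexity for $t$ in the interior of the time interval, unless $M_t$ is a catenoid. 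Since $\gamma_t \subset \{y \geq c > 0\}$ with uniformly bounded curvature, the convex graph $u_t$ is controlled, and I would next argue that $u_t$ is monotonically nondecreasing in $t$: outward mean convexity means the flow moves in the direction of the outward normal, so $\partial_t u_t \geq 0$ pointwise, and hence $u_t(x)$ converges as $t \to \infty$ to some (possibly infinite) limit for each $x$.

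The next step is to rule out the case that $u_t$ stays uniformly bounded as $t \to \infty$. If it did, then by the uniform curvature bound $M_t$ would subconverge (along a sequence of times) to a smooth complete minimal surface of revolution bounded away from the axis, which must be a catenoid $N$; but then the monotone convergence forces $M_t \to N$ and, by a barrier/avoidance argument using a slightly outward-translated catenoid as in the proof of Lemma \ref{l3}, one gets a contradiction unless $M_t$ was already equal to $N$, i.e. $M_t$ is the catenoid. Having excluded that, $u_t$ must become infinitely far from the axis of rotation — this is the analogue of Lemma \ref{far}, and I would try to reuse its proof essentially verbatim: the tip point $u_t(0)$ is the minimum of the convex graph, and if it stayed bounded while curvature blew up near the tip, one produces an interior minimum $y_t$ of $H$ with $H(y_t) > 0$ increasing, forcing $H(u_t(0))$ away from zero, contradicting $H(u_t(0)) \to 0$ (which holds because $u_t(0) \to C < \infty$ would make the tip stationary). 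Once $u_t$ escapes to infinity, the $\frac{n-1}{u}$ term in (\ref{MCF rotationally symmetric}) becomes negligible on compact sets, so any pointed limit of $u_t$ as $t \to \infty$ is an embedded convex ancient solution to graphical curve shortening flow, defined over all of $\mathbb{R}$ for $n=2$ and over a bounded interval for $n \geq 3$; by the Bourni-Langford-Tinaglia classification \cite{BLT2} this limit is a line (when $n=2$) or a grim reaper (when $n \geq 3$), which is exactly the asymptotic conclusion of Theorem \ref{uniqueness}.

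I would also need to confirm the $t \to -\infty$ behavior: the monotone outward motion means $u_t$ is nonincreasing as $t \to -\infty$, so it decreases to a limit profile which, by the uniform curvature bound, is a smooth complete minimal surface of revolution, hence a catenoid, and $M_t$ converges to this catenoid from the outside — matching the stated conclusion. To make the uniform curvature bound available in the range $t \to \infty$ (needed for the pointed limit), I expect to invoke the same rescaling argument and Bourni-Langford-Tinaglia classification used in the proof of Theorem \ref{second theorem}: if curvature were unbounded along $t_m \to \infty$ one rescales at the point achieving $\lambda_m = \sup_{t \leq t_m}\sup|A|$, passes to a nonflat embedded convex graphical ancient curve shortening flow over a half-line or all of $\mathbb{R}$, contradicting the classification.

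The main obstacle I anticipate is the step ruling out that $u_t$ stays bounded as $t \to \infty$ and, relatedly, establishing that the tip mean curvature $H(u_t(0)) \to 0$ in the present generality — in Theorem \ref{second theorem} this leveraged the specific construction of the reapernoid out of approximate solutions $M^1 + \delta\nu$ and the resulting asymptotics, whereas here $M_t$ is an arbitrary solution satisfying only the three hypotheses. I would handle this by exploiting convexity of $u_t$ more directly: the minimum of a convex graph has $\partial_x u_t(0) = 0$, so at the tip $H(u_t(0)) = k(0) - \frac{n-1}{u_t(0)}$, and monotone boundedness of $u_t(0)$ forces $\partial_t u_t(0) \to 0$, hence $H(u_t(0)) \to 0$; then the interior-minimum-of-$H$ argument from Lemma \ref{far} carries over. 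The other delicate point is that Theorem \ref{uniqueness} deliberately does not claim the limiting grim reaper has the same width as any particular catenoid, so I do not need a matching-width argument — this simplifies the $t \to \infty$ analysis compared to Theorem \ref{second theorem}.
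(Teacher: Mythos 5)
Your overall skeleton (catenoid limit from outside as $t\to-\infty$, exclusion of a bounded configuration as $t\to+\infty$, pointed limit classified by Bourni--Langford--Tinaglia) matches the paper, but there is a genuine gap at the very first step that the rest of your argument leans on: you declare that the profile curve $\gamma_t$ can be ``written as a graph $u_t$ over the axis of rotation (which is valid since $\gamma_t$ stays on one side).'' Staying in $\{y\geq c\}$ does not make a curve graphical over the $x$-axis; nothing in the hypotheses of Theorem \ref{uniqueness} rules out overhangs or vertical tangents in $\gamma_t$. Your deduction of convexity, $k \geq \frac{n-1}{r}\cos\theta \geq 0$, already presupposes graphicality twice: formula (\ref{mean curvature of graph rotation}) is only valid at points where $\langle\nu,v\rangle>0$, and $\cos\theta\geq 0$ is exactly the graph condition. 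The same applies to the monotonicity claim $\partial_t u_t\geq 0$, to the tip identity $H(u_t(0))=k(0)-\frac{n-1}{u_t(0)}$, and to your verbatim reuse of Lemma \ref{far}. The paper is deliberately structured to avoid this issue: it never assumes the profile is a graph, and instead uses only that mean convexity makes the outside regions $K_t$ nest, that any smooth limit (at $t\to\pm\infty$, available by the hypothesized uniform curvature bound) is a complete $O(n)\times O(1)$-invariant minimal surface, hence a coaxial catenoid or plane (planes excluded by $\gamma_t\subset\{y\geq c\}$), and that two distinct coaxial, co-reflection-symmetric catenoids must intersect; graphicality enters only in the rescaled limit, where the paper notes the curve-shortening limit is convex because all but one principal curvature decays like $\tfrac{1}{r}$ in (\ref{mean curvature of graph rotation}) as the surface recedes from the axis, not because the finite-time profile was convex.

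Two smaller points. First, uniform boundedness of $|A|$ for all time is hypothesis (3) of Theorem \ref{uniqueness}, so your final paragraph re-deriving it by the rescaling argument from Theorem \ref{second theorem} is unnecessary (and that rescaling argument itself used the graphical, convex structure of the reapernoid's profile, so it would not transfer anyway); likewise, the curvature-blowup-at-the-tip branch of Lemma \ref{far} is vacuous here. Second, your exclusion of the bounded case as $t\to\infty$ via ``a slightly outward-translated catenoid as in the proof of Lemma \ref{l3}'' is not what Lemma \ref{l3} does and is not a valid barrier as stated; the clean argument, which is the paper's, is that a bounded flow with bounded curvature and a sign on $H$ would converge as $t\to\infty$ to a nonempty coaxial minimal surface $N^*$, which must transversally intersect the catenoid $N$ obtained at $t\to-\infty$, contradicting that $M_t$ stays in the outside of $N$ (with $N^*=N$ forcing $M_t$ to be the catenoid itself). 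If you repair the graphicality issue by following this softer, graph-free route, the rest of your outline goes through.
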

\begin{proof}
Since $M_t$ has a sign on mean curvature and has uniformly bounded curvature as $t \to -\infty$, it must be smoothly converging to a (a priori empty) union of complete smooth embedded minimal surfaces $N$ as $t \to -\infty$. This follows from the same reasoning as at the beginning of Lemma \ref{l3}. 
$\medskip$

We first see that $N$ is nonempty. Since $M_t$ is mean convex with respect to the outward normal, it bounds a flow of domains $K_t$, where $K_t$ are the outside components of $M_t$, and these nest, meaning that $K_t \subset K_s$ for $t>s$. This means that the closest point in $K_t$ to the axis of rotation will monotonically approach the axis of rotation as $t \to -\infty$. This implies that the limit $N$ must be nonempty, since the $K_t$ nest and monotonically approach the axis as $t \to -\infty$.
$\medskip$

The minimal surfaces comprising $N$ must all be disjoint because if they were not, this would contradict the smoothness or the embeddedness of $M_t$. We also know that each connected component of $N$ must be an $O(n)\times O(1)$-invariant minimal surface since $M_t$ is. In fact, all of the minimal surfaces comprising $N$ must be $O(n) \times O(1)$-invariant about the same axis of rotation and axis of reflection. Thus, $N$ must be given by a union of smooth disjoint profile curves, representing hyperplanes and catenoids, which are rotationally symmetric about the $x$-axis and reflection symmetric about the $y$-axis.
$\medskip$

Since $\gamma_t \subset \{y \geq c\}$, we must have that $N \subset \{y \geq c\}$, where $N$ in this case denotes the profile curves comprising $N$. We can see that the profile curves of the hyperplanes in $N$ must be given by vertical lines $\{x=a\}$. However, since each connected component of $N$ must be a complete surface, we cannot have a complete hyperplane given by $\{x=a\}$ which is also a subset of $\{y \geq c\}$. This means that $N$ consists only of catenoids. Now, none of the catenoids in $N$ may intersect each other because each smooth complete minimal surface in $N$ must be disjoint. Since all catenoids which are rotationally symmetric about the same axis and reflection symmetric about the same axis must transversally intersect each other, we have that if $N$ is nonempty, $N$ must consist of a single catenoid.  Thus, $M_t$ smoothly converges to a catenoid $N$ as $t \to -\infty$. 
$\medskip$

Since $M_t$ bounds the nesting domains $K_t$, $M_t$ must converge to the catenoid $N$ from one side. Since $M_t$ is mean convex with respect to the outward normal, it must converge to the catenoid $N$ from the outside. That is, $M_t$ must be a subset of the outside component of the catenoid (as defined above).
$\medskip$

Suppose that $M_t$ does not become infinitely far from the axis of rotation. Then, again using the uniform bound on curvature and mean convexity, $M_t$ must converge to a nonempty complete smooth minimal surface $N^*$ as $t \to \infty$. The surface $N^*$ must also be rotationally symmetric and reflection symmetric about the same axes as $N$. This implies that $N^*$ is a catenoid or a plane which must transversally intersect $N$. This is a contradiction because $M_t$ must be a subset of the outside component of $N$ for all time. Thus, $M_t$ becomes infinitely far from the axis of rotation.
$\medskip$

Using the uniform curvature bound assumption, we may find an ancient curve shortening flow in the limit as $t \to \infty$, as in the last part of the proof of Theorem \ref{second theorem}. This works even without assuming graphicalness, as it is akin to rescaling (\ref{mean curvature of graph rotation}) with $r \to \infty$. Since $M_t$ is mean convex, the limiting ancient curve shortening flow must be convex since all but one of the principal curvatures on $M_t$ tend to zero as we move away from the axis of rotation by (\ref{mean curvature of graph rotation}). Thus, we find a convex noncompact ancient solution to curve shortening flow in the limit which is disjoint from $\{y < 0\}$ by normalizing the tip to the origin of the $x$-$y$ plane. Using the classification of Bourni-Langford-Tinaglia \cite{BLT}, we find that $M_t$ must converge as $t \to \infty$ to a grim reaper for $n\geq 3$ (since it is trapped in a slab and bounded away from the lower half-plane) and must become flat for $n=2$ (since the flow cannot be contained in any slab). Here, we note that the grim reaper may potentially have a smaller width than that of the catenoid, unlike in Theorem \ref{second theorem}, but the uniform curvature bound prevents the grim reaper from having arbitrarily smaller width than that of the catenoid.  
\end{proof}

\begin{lem}
There is no such $M_t$ which is mean convex with respect to the inward-pointing unit normal. 
\end{lem}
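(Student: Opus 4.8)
The plan is to run the same kind of dichotomy as in the proof of Lemma \ref{outward pointing}, now exploiting that mean convexity with respect to the inward normal makes the flow shrink \emph{toward} its axis --- which, for an eternal flow of bounded curvature, will turn out to be impossible. First I would dispose of the degenerate case: if $H\equiv 0$ then $M_t$ is a static minimal surface, hence a catenoid by rotational symmetry, and that possibility is already accounted for in Lemma \ref{outward pointing}; so I may assume $H\not\equiv 0$. Then from $\partial_t H=\Delta_{M_t}H+|A|^2H$, the strong maximum principle, and uniqueness of the flow, $H>0$ everywhere and for all times with respect to the inward normal; hence the flow moves strictly inward, the inside domains $\Omega_t$ --- the components of $\mathbb{R}^{n+1}\setminus M_t$ containing the axis --- are strictly nested, $\Omega_t\subsetneq\Omega_s$ for $t>s$, and distinct timeslices are disjoint.

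Next I would treat the forward limit exactly as in Lemma \ref{outward pointing}. Since $\gamma_t\subset\{y\geq c\}$, each $\Omega_t$ contains the fixed bi-infinite solid tube of radius $c$ about the axis, so $\Omega_\infty:=\bigcap_t\Omega_t$ is a proper nonempty set and, by the uniform curvature bound, $M_t$ converges as $t\to\infty$ to the smooth $O(n)\times O(1)$-invariant minimal surface $N^+:=\partial\Omega_\infty$. As there, $N^+$ contains no hyperplane piece (it would meet $\{y<c\}$, and could not bound a domain containing a bi-infinite tube) and cannot contain two distinct coaxial catenoids (being $O(1)$-symmetric about the same hyperplane they would have different scales and cross transversally), so $N^+$ is a single coaxial catenoid, and by strict nesting $M_t$ lies strictly outside $N^+$ for all $t$.

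Now I would look at $t\to-\infty$, where $\Omega_t$ increases to $\Omega_{-\infty}:=\bigcup_t\Omega_t$. If $\Omega_{-\infty}\neq\mathbb{R}^{n+1}$, then as above $M_t$ converges as $t\to-\infty$ to a single coaxial catenoid $N^-:=\partial\Omega_{-\infty}$, approached from the inside, so $M_t$ is squeezed strictly between $N^+$ (from outside) and $N^-$ (from inside) for all $t$; but two distinct coaxial catenoids, both reflection symmetric about $\{x=0\}$, have different scales and therefore cross transversally, leaving no room between them, while $N^+=N^-$ is absurd --- a contradiction. The remaining possibility is $\Omega_{-\infty}=\mathbb{R}^{n+1}$, equivalently $m(t):=\operatorname{dist}(M_t,\text{axis})\to\infty$ as $t\to-\infty$, and this is the step I expect to be the main obstacle. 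Here, at a point realizing $m(t)$ the profile curve has horizontal tangent and non-negative curvature $k$ (with respect to the outward normal), so by (\ref{mean curvature of graph rotation}) and mean convexity $0\leq H_{\mathrm{inward}}=\tfrac{n-1}{m(t)}-k$, whence $0\leq k\leq\tfrac{n-1}{m(t)}\to 0$; the rotational curvatures there being $O(1/m(t))$, it follows that $|A|\to 0$ at the innermost point. Translating each profile curve so its minimum sits at the origin and passing to a limit as $t\to-\infty$ --- using that $\gamma_t$ is far from the axis, so the last term of (\ref{MCF rotationally symmetric}) drops out --- I would extract an embedded eternal curve shortening flow of bounded curvature whose timeslices are complete graphs bounding convex subgraphs; since such a curve is graphical over all of $\mathbb{R}$ (a concave graph that stays above a fixed level cannot have a finite-interval end) it is a straight line, by the Bourni--Langford--Tinaglia classification \cite{BLT2}. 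Hence the profile curvature of $M_t$ tends to zero uniformly on compacta as $t\to-\infty$, so $M_t$ degenerates, after rescaling by $1/m(t)$, to round cylinders $S^{n-1}\times\mathbb{R}$ of radius $\to\infty$; in other words the backward tangent flow of $M_t$ is the shrinking round cylinder.

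The remaining work --- and the genuine difficulty --- is to show that this is incompatible with $M_t$ being eternal of bounded curvature. Two routes seem plausible. The first is comparison: for $t$ very negative $M_t$ agrees, over a region large compared to $m(t)$, with an almost-round cylinder of radius $\approx m(t)$; comparing with a genuine shrinking cylinder via the avoidance principle should force $M_t$ to reach its axis within time $O(m(t)^2)$, contradicting $\gamma_t\subset\{y\geq c\}$. The second is via entropy: since $\lambda(M_t)\to\lambda(\text{catenoid})$ as $t\to\infty$ while the backward tangent flow is the round cylinder, whose entropy is strictly smaller than that of the catenoid, the monotonicity of the Colding--Minicozzi entropy is violated. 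In either case one must handle carefully the ``short, fat'' regime (where the flaring ends of $\gamma_t$ stay at bounded $x$-distance while $m(t)\to\infty$) and, for the entropy route, the fact that the catenoid's entropy is attained only at infinite scale; this localization is where the main effort lies, while everything preceding it reuses the tools of Sections 3 and 4.
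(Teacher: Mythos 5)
Your forward analysis ($M_t$ converges as $t\to\infty$ to a single coaxial catenoid $N^+$, approached from the outside) and the backward dichotomy agree with the paper, and your first branch (a backward limit catenoid $N^-$) is essentially correct, though the phrase ``leaving no room between them'' should be replaced by the cleaner point that $N^-$ would have to lie weakly outside $N^+$, while two distinct coaxial catenoids with the same reflection plane always cross transversally. The genuine gap is exactly the branch you flag as the main obstacle: $m(t)=\operatorname{dist}(M_t,\text{axis})\to\infty$ as $t\to-\infty$. You leave this case unproved, and neither sketched route works as stated. The avoidance route compares on the wrong side: a shrinking round cylinder of radius less than $m(t)$ lies closer to the axis than $M_t$, so avoidance only says the two flows never touch, which is vacuous as the cylinder shrinks away; nothing forces $M_t$ to reach its axis in time $O(m(t)^2)$. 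Indeed, since $M_t$ stays outside $N^+$, its profile is trapped in the catenoid's slab (for $n\ge3$), so the ``cylindrical'' part is short and fat with mean curvature of order $(n-1)/m(t)$ and descends only on the time scale $m(t)^2$, which is perfectly compatible with an eternal flow of bounded curvature --- so the heuristic contradiction you are aiming for is not there. The entropy route misidentifies the backward blowdown: rescaling a slab-confined profile at height $m(t)$ produces a multiplicity-two (parallel-sheet) configuration rather than a single round cylinder, so its entropy is not below that of the catenoid and no violation of monotonicity follows without substantial extra work; you acknowledge this localization issue but do not resolve it.

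The paper closes this case with a short barrier argument that your proposal is missing. Let $N^*$ be a coaxial catenoid of slightly larger radius than $N^+$, with the same axis of rotation and reflection plane; then $N^*$ intersects $N^+$ transversally. Because $M_t$ lies outside $N^+$ (so its profile sits above the profile of $N^+$, and within its slab when $n\ge3$) and $m(t)\to\infty$ as $t\to-\infty$, the surface $M_t$ is disjoint from $N^*$ for $t$ sufficiently negative: in the region where the profile of $N^*$ exceeds that of $N^+$ it is bounded, and elsewhere it lies below the profile of $N^+$. The comparison principle for these rotationally symmetric, bounded-geometry flows (with $N^*$ static) keeps $M_t$ disjoint from $N^*$ for all later times; but $M_t\to N^+$ smoothly as $t\to+\infty$ and $N^+$ crosses $N^*$, so $M_t$ must eventually intersect $N^*$ --- a contradiction. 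Replacing your unfinished comparison/entropy discussion with this barrier argument completes the proof along the paper's lines.
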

\begin{proof}
Using that $M_t$ has uniformly bounded curvature and is mean convex with respect to the inward-pointing unit normal, we find that $M_t$ converges to a nonempty complete smooth minimal surface $N$ as $t \to \infty$. This follows just as in Lemma \ref{outward pointing}, except the change in orientation of the normal means we must take $t\to \infty$. Similarly, $N$ must be a catenoid and $M_t$ must be approaching $N$ from one side. In fact, $M_t$ must be a subset of the outside of $N$. Moreover, $M_t$ must be becoming infinitely far from the axis of rotation as $t \to -\infty$. Now consider a catenoid $N^*$ of larger radius than $N$ which is rotationally symmetric and reflection symmetric about the same axes as $N$. The catenoid $N^*$ intersects $N$, but for $t$ negative enough, $N^*$ must be disjoint from $M_t$ as $M_t$ becomes infinitely far from the axis of rotation as $t \to -\infty$. This means that $M_t$ may not flow into $N$, which is a contradiction. Thus, $M_t$ does not exist.
\end{proof}

\section{Concluding Remarks}

With the assumption of mean convexity for a closed surface or the assumption of $\alpha$-noncollapsed \cite{HK}, all singularities to mean curvature flow must be modeled on convex ancient solutions. Since our examples of ancient solutions from Theorem \ref{first theorem} are nonconvex, they would not arise as the blowup limits of such flows. To the authors' knowledge there is no actual obstruction to the ancient solutions of Theorem \ref{first theorem} arising as singularity models of the flow of some surface, although such surfaces would certainly be quite rare. In the case the minimal surface $M$ is a catenoid, such a blowup would seem to violate the multiplicity one conjecture. This is because at the singular point (not rescaling) the flow would appear to be two sheets, coming together, joined with a small neck.
$\medskip$

Specific to the reapernoid, the eternal solution of Theorem \ref{second theorem}, blowup limits obtained via rescaling by the supremum of $|A|$ must attain their maximum of $|A|$ at some point in spacetime. It is likely that the reapernoid does not attain its maximum of $|A|$, meaning that it does not arise as such a blowup limit. We do not prove this, as this would require finer control on the curvature of the flow than we find. It remains an open question if all eternal solutions that arise from this blowup procedure must be translating.
$\medskip$

The periodicity/asymptotic flatness assumptions in Theorem \ref{first theorem} also seems to be purely technical and we believe the statement without these qualifiers is true. In other words, any unstable minimal surface should give rise to an ancient solution. Since the existence of an ancient solution coming out of a minimal surface would imply it is unstable, this could potentially provide a characterization of instability of minimal surfaces of uniformly bounded curvature in $\mathbb{R}^n$, if true. 
$\medskip$

The bounded curvature assumption in Theorem \ref{first theorem} seems more necessary. This assumption gives that the flow of each of the perturbations will exist for a long time provided the approximate solutions are chosen sufficiently close. If the minimal surface in question has globally unbounded curvature it seems reasonable to expect that perturbation will quickly ``fly away'' from points of higher and higher curvature so there would be no ancient solution flowing from such minimal surfaces. On the other hand, if one could skirt around the applications of bounded curvature mentioned above, perhaps one could find a related ancient solution by using initial approximate flows that are non-constant perturbations of the minimal surface $M$ depending on the curvature, unlike the equidistant perturbations used in the proof of Theorem \ref{first theorem}.
$\medskip$

\vspace{-.2in}
The construction of the ancient solutions in Theorem \ref{first theorem} given in this article could possibly be applied in the case of ambient manifolds of positive scalar curvature, by using the more general results in Fischer-Colbrie and Schoen \cite{FCS}. That is, in 3-manifolds of positive scalar curvature the stable minimal surfaces are either diffeomorphic to planes or cylinders. It is not directly clear how to adapt some parts of the proofs however, e.g. if the ambient manifold does not admit a group of translations. There could be more specialized classes of ambient spaces where our construction can be carried out by assuming some symmetry. See Choi-Mantoulidis \cite{ChM} for constructions of ancient solutions out of closed minimal surfaces in non-Euclidean ambient manifolds. Note that their techniques differ significantly from those used in this paper and are of wider interest to more general types of flows.
$\medskip$

The uniqueness statement of Theorem \ref{uniqueness} raises questions about how much this can be generalized. We conjecture that the reapernoid eternal solution obtained in Theorem \ref{second theorem} is unique among all $O(n)\times O(1)$-invariant eternal solutions, possibly without a curvature bound and not just in an asymptotic sense. This seems reasonable to expect because the catenoid has index 1 and so we would not expect more than one distinct ancient solution flowing out of it on one side. We also conjecture that one could drop the assumption of the sign on mean curvature in Theorem \ref{uniqueness} and obtain the same result. Similar uniqueness results for other minimal surfaces could also be possible.

\linespread{1}
\bibliographystyle{alpha}

\end{document}